\documentclass[a4paper]{amsart}
\usepackage{tikz-cd}
\usepackage{ifthen}
\usepackage{amssymb}
\usepackage{amsthm}
\usepackage{hyperref}
\calclayout 

\title{Highest weight categories and recollements}
\author[Henning Krause]{Henning Krause}
\address{Henning Krause\\ Fakult\"at f\"ur Mathematik\\
Universit\"at Bielefeld\\ 33501 Bielefeld\\ Germany.}
\email{hkrause@math.uni-bielefeld.de}
\date{December 15, 2015}

\theoremstyle{plain}
\newtheorem{lem}{Lemma}[section]
\newtheorem{prop}[lem]{Proposition}
\newtheorem{cor}[lem]{Corollary}
\newtheorem{thm}[lem]{Theorem}

\theoremstyle{remark}
\newtheorem{rem}[lem]{Remark}

\theoremstyle{definition}
\newtheorem{exm}[lem]{Example}
\newtheorem{defn}[lem]{Definition}

\numberwithin{equation}{section}

\newcommand{\smatrix}[1]{\left[\begin{smallmatrix}#1\end{smallmatrix}\right]}

\renewcommand{\mod}{\operatorname{mod}\nolimits}

\newcommand{\ann}{\operatorname{ann}\nolimits}

\newcommand{\proj}{\operatorname{proj}\nolimits}

\newcommand{\rad}{\operatorname{rad}\nolimits}

\newcommand{\add}{\operatorname{add}\nolimits}

\newcommand{\coh}{\operatorname{coh}\nolimits}
\newcommand{\Mod}{\operatorname{Mod}\nolimits}

\newcommand{\End}{\operatorname{End}\nolimits}
\newcommand{\Hom}{\operatorname{Hom}\nolimits}

\renewcommand{\Im}{\operatorname{Im}\nolimits}
\newcommand{\Ker}{\operatorname{Ker}\nolimits}

\renewcommand{\dim}{\operatorname{dim}\nolimits}

\newcommand{\Ext}{\operatorname{Ext}\nolimits}
\newcommand{\Tor}{\operatorname{Tor}\nolimits}
\newcommand{\Filt}{\operatorname{Filt}\nolimits}
\newcommand{\Serre}{\operatorname{Serre}\nolimits}

\newcommand{\Pic}{\operatorname{Pic}\nolimits}

\newcommand{\id}{\operatorname{id}\nolimits}

\newcommand{\op}{\mathrm{op}}

\newcommand{\lto}{\longrightarrow}
\renewcommand{\to}{\rightarrow}
\newcommand{\xto}{\xrightarrow}

\def\e{\varepsilon}

\def\p{\phi}

\def\De{\Delta}
\def\Ga{\Gamma}
\def\La{\Lambda}

\def\A{{\mathcal A}}
\def\B{{\mathcal B}}
\def\C{{\mathcal C}}

\def\O{{\mathcal O}}
\def\P{{\mathcal P}}

\def\bbQ{\mathbb Q}
\def\bbR{\mathbb R}
\def\bbX{\mathbb X}

\def\bfD{\mathbf D}

\def\fra{\mathfrak a}

\begin{document}

\begin{abstract}
  We provide several equivalent descriptions of a highest weight
  category using recollements of abelian categories. Also, we explain
  the connection between sequences of standard and exceptional
  objects.
\end{abstract}

\maketitle 

\section{Introduction}

Highest weight categories and quasi-hereditary algebras arise
naturally in representation theory and were introduced in a series of
papers by Cline, Parshall, and Scott \cite{CPS1988, PS1988,Sc1987};
see also the work of Dlab and Ringel \cite{DR1989,DR1992}. The
intimate connection between highest weight categories and recollements
of derived categories was noticed right from the beginning. In this
note we \emph{characterise} highest weight categories in terms of
recollements of abelian categories; see Theorem~\ref{th:hwt}.

A highest weight category is determined by its standard objects
(usually denoted by $\De_i$, where the index $i$ refers to the
weight).  An efficient way to formulate this for a module category is
given by the following result, which is a variation of a result of
Dlab and Ringel \cite{DR1992}. 

\begin{thm}\label{th:standard-defn}
  Let $\A$ be the category of finitely generated modules over an artin
  algebra. Then $\A$ is a highest weight category if and only if there
  are objects $\De_1,\ldots,\De_n$  having the following
  properties:
\begin{enumerate}
\item $\End_\A(\De_i)$ is a division ring for all $i$.
\item $\Hom_\A(\De_i,\De_j)=0$ for all $i> j$. 
\item $\Ext^1_\A(\De_i,\De_j)=0$ for all $i\ge  j$. 
\item A projective generator of $\Filt(\De_{1},\ldots,\De_n)$ is also
  one for $\A$.\qed
\end{enumerate}
\end{thm}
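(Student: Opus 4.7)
The plan is to prove the two implications of the equivalence in turn, with the forward direction being essentially bookkeeping and the backward direction the substantive part, which I would approach inductively on $n$ using the recollement characterisation of Theorem~\ref{th:hwt}.

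For the forward direction (highest weight category implies (1)--(4)): conditions (1) and (2) are immediate consequences of the axioms, since the natural map $\End_\A(\Delta_i)\to\End_\A(L_i)$ to the division endomorphism ring of the simple top $L_i$ is an isomorphism, and the weight ordering yields the required Hom-vanishing. Condition (3) expresses the projectivity of each $\Delta_i$ in the filtered subcategory above it. For (4), every indecomposable projective $P(i)$ admits a standard filtration, so $P := \bigoplus_i P(i)$ lies in $\Filt(\Delta_1,\ldots,\Delta_n)$ and is simultaneously a projective generator of $\A$ and of that subcategory.

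For the backward direction, the base case $n=1$ is direct: (1) and (3) force $\Filt(\Delta_1)$ to be semisimple with $\Delta_1$ simple, and (4) then identifies $\A$ with $\mod\End_\A(\Delta_1)^{\op}$, which is a (trivially) highest weight category. For the inductive step, condition (3) with $i=n$ makes $\Delta_n$ projective in $\C := \Filt(\Delta_1,\ldots,\Delta_n)$, and (1) forces $\Delta_n$ to be indecomposable with simple top $L_n$ (else $\End_\A(\Delta_n)$ would contain a nonzero nilpotent coming from a second $L_n$-layer). By (4), a projective generator of $\C$ is also one of $\A$, so $\Delta_n$ is a summand and hence projective in $\A$, identifying it with the projective cover $P(L_n)$. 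I would then construct a recollement
\[
 \A' \hookrightarrow \A \twoheadrightarrow \mod \End_\A(\Delta_n)^{\op},
\]
where $\A'$ is the Serre subcategory of modules not having $L_n$ as a composition factor and the quotient is induced by $\Hom_\A(\Delta_n,-)$. Condition (2), together with $L_i$ having multiplicity one in $\Delta_i$, ensures $\Delta_1,\ldots,\Delta_{n-1}$ lie in $\A'$ and satisfy (1)--(4) relative to $\A'$; the inductive hypothesis then makes $\A'$ a highest weight category, and Theorem~\ref{th:hwt} applied to the recollement concludes that so is $\A$.

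The main obstacle will be constructing and verifying the recollement in the inductive step: isolating the correct Serre subcategory $\A'$ (driven by the simple $L_n$ rather than by vanishing of $\Hom_\A(\Delta_n,-)$, which alone is not closed under quotients), producing the adjoints to the quotient functor through $\Delta_n$ and its $\End_\A(\Delta_n)$-dual, and checking that the hypotheses of Theorem~\ref{th:hwt} are met. A subsidiary technical point is verifying that the $\Ext^1$-vanishing in (3) survives the passage from $\A$ to $\A'$, which reduces to showing that the extensions witnessing (3) are already realised inside $\A'$.
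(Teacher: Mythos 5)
Your overall strategy for the converse --- induction on $n$ via a recollement and Theorem~\ref{th:hwt} --- is a genuinely different route from the paper's, which disposes of the converse in one line by invoking Lemma~\ref{le:standard}: that lemma (whose hypotheses are exactly your condition (3) plus Ext-finiteness of $\A$, automatic over an artin algebra) already manufactures exact sequences $0\to U_i\to P_i\to\De_i\to 0$ with $U_i\in\Filt(\De_{i+1},\ldots,\De_n)$ and exhibits $\bigoplus_i P_i$ as a projective generator of $\Filt(\De_1,\ldots,\De_n)$; your condition (4) then upgrades it to a projective generator of $\A$, and Definition~\ref{de:hwt} is verified verbatim. Your route can be made to work, but the universal-extension construction at the heart of Lemma~\ref{le:standard} is not avoided by it --- it reappears inside the proof of Theorem~\ref{th:hwt}, (2)$\Rightarrow$(1) --- so the recollement detour buys nothing and costs the extra verifications below. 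For the forward direction, note that (1) and (2) are literally clauses of Definition~\ref{de:hwt}, so your argument via simple tops is beside the point; only (3) needs proof, and it follows by induction from Lemma~\ref{le:reduction}.

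The concrete gaps in your inductive step are these. First, Theorem~\ref{th:hwt} requires each recollement to be \emph{homological}; you never address this. It holds because the kernel of $\La\to\bar\La$ is the trace of $\De_n$ in $\La$, which is a direct sum of copies of $\De_n$ and hence projective (Lemma~\ref{le:idempotent}) --- but seeing that requires $\La\in\Filt(\De_1,\ldots,\De_n)$, i.e.\ condition (4), together with the counit argument of Lemma~\ref{le:filtered}. Second, you assert without argument that $\De_1,\ldots,\De_{n-1}$ satisfy (4) relative to $\A'$; this is the real work, and it again goes through applying $X\mapsto\bar X$ to the projective generator and Lemma~\ref{le:filtered}(2), which needs $\Ext^1_\A(\De_n,\De_j)=0$ for $j<n$ to push the $\De_n$-layers to the bottom of a filtration. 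Third, two stated justifications are off: condition (1) alone does not force $\De_n$ to have simple top (an indecomposable module with top $L\oplus L'$, $L\not\cong L'$, carries no forced nilpotent endomorphism) --- the simple top only comes after you know $\De_n$ is projective in $\A$, which you derive from (3) and (4); and in the base case it is (4), not (1) and (3), that makes $\De_1$ simple in $\A$. Finally, your two flagged ``technical points'' are non-issues: $\{X\mid\Hom_\A(\De_n,X)=0\}$ is closed under quotients once $\De_n$ is projective, and for a Serre subcategory $\A'\subseteq\A$ one has $\Ext^1_{\A'}(X,Y)=\Ext^1_\A(X,Y)$ for all $X,Y\in\A'$, so the $\Ext^1$-vanishing in (3) descends for free.
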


This description of a highest weight category via its sequence of
standard objects suggests a close connection with the concept of an
exceptional sequence, as introduced in the study of vector bundles
\cite{Bo1990,Go1988,GR1987,Ru1990}.  We make this connection precise
in Theorem~\ref{th:exceptional} and claim that both concepts are
basically equivalent, even though their origins are quite different. A
special instance of this theorem for vector bundles on rational
surfaces is due to Hille and Perling \cite{HP2014}. For further
examples of this connection, relating derived categories of
Grassmannians and modular representation theory, see \cite{BLV,
  Ef2014}.

The crucial issue for understanding the concept of a highest weight
category is to find out when a recollement of abelian categories
extends to a recollement of their derived categories. We address this
problem explicitly in an appendix and provide a necessary and
sufficient criterion. This is not used in the main part of the paper
but serves as an illustration for some of the key arguments and might
be of independent interest.

This paper is organised as follows. In \S\ref{se:recollements} we
recall definitions and basic facts about recollements of abelian and
triangulated categories. The characterisation of highest weight
categories via recollements is given in \S\ref{se:hwt}. Then we
explain in \S\ref{se:quasi-hereditary} the equivalent concept of a
quasi-hereditary ring and provide a method for constructing
quasi-hereditary endomorphism rings in abelian categories. The final
\S\ref{se:exceptional} is devoted to the connection between sequences
of standard and exceptional objects.

Polynomial representations of general linear groups provide interesting
examples of highest weight categories. In that case it is appropriate
to work with $k$-linear highest weight categories over an arbitrary
commutative base ring $k$, and we refer to \cite{Kr2014} for a
detailed exposition.

\section{Recollements}\label{se:recollements}

\subsection*{Recollements of abelian and triangulated categories}
We recall the definition of a recollement using the standard notation
\cite[1.4]{BBD1982}. In fact, any recollement is built from two
diagrams involving `localisation' \cite{Ga1962} and `colocalisation'
\cite{Se1960}.

\begin{defn}
  A \emph{localisation sequence} of abelian (triangulated) categories
  is a diagram of functors
\begin{equation}\label{eq:loc}
\begin{tikzcd}
\A' \arrow[tail,yshift=0.75ex]{rr}{i_!} &&\A  \arrow[twoheadrightarrow,yshift=-0.75ex]{ll}{i^!}
\arrow[twoheadrightarrow,yshift=0.75ex]{rr}{j^*} &&\A''  \arrow[tail,yshift=-0.75ex]{ll}{j_*}
\end{tikzcd}
\end{equation}
satisfying the following conditions:
\begin{enumerate}
\item  $i_!$ and $j^*$ are exact functors of  abelian (triangulated) categories.
\item $(i_!,i^!)$ and $(j^*,j_*)$ are adjoint pairs.
\item $i_!$ and $j_*$ are fully faithful functors.
\item An object in $\A$ is annihilated by $j^*$ iff it
  is in the essential image of $i_!$.
\end{enumerate}
\end{defn}
Note that condition (3) admits an equivalent formulation; see
\cite[I.1.3]{GZ}.  In the presence of (2), the functor $i_!$ is fully
faithful iff the unit $\id_{\A'}\to i^!i_!$ is an isomorphism. Also,
the functor $j_*$ is fully faithful iff the counit
$j^*j_*\to\id_{\A''}$ is an isomorphism.

\begin{defn}  
A \emph{colocalisation sequence} of abelian (triangulated) categories
  is a diagram of functors
\begin{equation}\label{eq:coloc}
\begin{tikzcd}
\A' \arrow[tail,yshift=-0.75ex]{rr}[swap]{i_*} &&\A  \arrow[twoheadrightarrow,yshift=0.75ex]{ll}[swap]{i^*}
\arrow[twoheadrightarrow,yshift=-0.75ex]{rr}[swap]{j^!} &&\A''  \arrow[tail,yshift=0.75ex]{ll}[swap]{j_!}
\end{tikzcd}
\end{equation}
satisfying the following conditions:
\begin{enumerate}
\item  $i_*$ and $j^!$ are exact functors of  abelian (triangulated) categories.
\item $(i^*,i_*)$ and $(j_!,j^!)$ are adjoint pairs.
\item $i_*$ and $j_!$ are fully faithful functors.
\item An object in $\A$ is annihilated by $j^!$ iff it
  is in the essential image of $i_*$.
\end{enumerate}
\end{defn}

\begin{defn}
  A \emph{recollement} of abelian (triangulated) categories is a
  diagram of functors
\begin{equation}\label{eq:rec}
\begin{tikzcd}
  \A' \arrow[tail]{rr}[description]{i_*=i_!} &&\A
  \arrow[twoheadrightarrow,yshift=-1.5ex]{ll}{i^!}
  \arrow[twoheadrightarrow,yshift=1.5ex]{ll}[swap]{i^*}
  \arrow[twoheadrightarrow]{rr}[description]{j^!=j^*} &&\A''
  \arrow[tail,yshift=-1.5ex]{ll}{j_*}
  \arrow[tail,yshift=1.5ex]{ll}[swap]{j_!}
\end{tikzcd}
\end{equation}
such that the subdiagram \eqref{eq:loc} is a localisation sequence
and the subdiagram \eqref{eq:coloc} is a colocalisation sequence.

The recollement 
is called \emph{homological} if the functor $i_*$ induces for all
$X,Y\in\A'$ and $p\ge 0$ isomorphisms
\[\Ext^p_{\A'}(X,Y)\xto{\sim}\Ext^p_\A(i_*(X),i_*(Y)).\]
The  terminology follows that used   in \cite{Ps2014}, where $i_*$ is
called \emph{homological embedding}.
\end{defn}


Given a colocalisation sequence \eqref{eq:coloc} and an object $X$ in
$\A$, we have the counit $j_!j^!(X)\to X$ and the unit
$X\to i_*i^*(X)$.  These fit into an exact sequence
\[j_!j^!(X)\lto X\lto i_*i^*(X)\lto 0\qquad \text{($\A$ abelian)}\]
and an exact triangle
  \[j_!j^!(X)\lto X\lto i_*i^*(X)\lto \qquad \text{($\A$ triangulated)}.\] 

  Often we consider abelian categories having \emph{enough projective
    objects}, that is, every object $X$ admits an epimorphism $P\to X$
  with $P$ projective. We use without mentioning that a left adjoint
  of an exact functor preserves projectivity.

\subsection*{Recollements of module categories}

Let $\La$ be a ring (associative with identity). We consider the
category $\Mod\La$ of right $\La$-modules.  We write $\mod\La$ for the
full subcategory of finitely presented $\La$-modules and $\proj\La$
for the full subcategory of finitely generated projective
$\La$-modules.

The following result summarises some basic facts about subcategories
of $\Mod\La$ consisting of modules that are annihilated by a fixed
ideal. Note that all ideals in this work are two-sided.

Recall that a full subcategory $\C\subseteq\A$ of an abelian category
is a \emph{Serre subcategory} if for every exact sequence
$0\to X'\to X\to X''\to 0$ in $\A$ we have $X\in\C$ iff
$X',X''\in\C$. For example, the objects that are annihilated by an
exact functor $\A\to\A'$ form a Serre subcategory.

\begin{prop}[{\cite[Proposition~7.1]{Au1974}}]
\label{pr:annihilated}
  A full subcategory $\C$ of $\Mod\La$ is of the form $\Mod\La/\fra$
  for some ideal $\fra$ of $\La$ if and only if the
  following holds:
\begin{enumerate}
\item If $X'\subseteq X$ is a submodule of $X\in\C$, then $X'$ and
  $X/X'$ are in $\C$.
\item If $(X_i)_{i\in I}$ is a family of modules in $\C$, then their product
  $\prod_{i\in I}X_i$ is in $\C$.
\end{enumerate}
In this case $\fra=\bigcap_{X\in\C}\ann X$. Moreover, $\fra^2=\fra$ if
and only if $\C$ is a Serre subcategory.\qed
\end{prop}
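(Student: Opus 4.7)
The plan is to handle the characterisation in two directions, extract the formula for $\fra$, and finally verify the criterion for $\C$ to be Serre. The forward implication is immediate: if $\C=\Mod\La/\fra$, then $\C$ is an abelian subcategory of $\Mod\La$ closed under the formation of products (a product of $\La/\fra$-modules computed in $\Mod\La$ remains a $\La/\fra$-module), which yields both (1) and (2). For the reverse implication I would define $\fra:=\bigcap_{X\in\C}\ann X$; the inclusion $\C\subseteq\Mod\La/\fra$ is then built into the definition, and this simultaneously justifies the displayed formula for $\fra$.

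The heart of the proof is the opposite inclusion $\Mod\La/\fra\subseteq\C$, and my first task would be to produce $\La/\fra$ itself as an object of $\C$. Each pair $(X,x)$ with $X\in\C$ and $x\in X$ gives a $\La$-linear map $\La\to X$, $\lambda\mapsto x\lambda$, with kernel $\ann_\La(x)$. Assembling these into a single map to a suitable product of objects of $\C$, I obtain a $\La$-linear map whose kernel is $\bigcap_{X,x}\ann_\La(x)=\fra$. By (2) the target lies in $\C$, so by (1) the image $\La/\fra$ does too. Since every $\La/\fra$-module $M$ is a quotient of a free $\La/\fra$-module $(\La/\fra)^{(I)}$, and the latter embeds into $(\La/\fra)^{I}\in\C$ (by (2)), two further applications of (1) place $M$ in $\C$.

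For the final statement, I would exploit that $\C$ is automatically closed under subobjects and quotients, so being Serre is equivalent to closure under extensions. Given an extension $0\to X'\to X\to X''\to 0$ in $\Mod\La$ with $X',X''$ killed by $\fra$, one has $\fra X\subseteq X'$ and therefore $\fra^2 X\subseteq\fra X'=0$; hence $X$ is killed by $\fra$ precisely when $\fra^2=\fra$. Conversely, if $\fra^2\neq\fra$, the sequence $0\to\fra/\fra^2\to\La/\fra^2\to\La/\fra\to 0$ has both outer terms in $\C$ while the middle term $\La/\fra^2$ has annihilator $\fra^2\subsetneq\fra$ and thus lies outside $\C$.

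The only step requiring any cleverness is the embedding of $\La/\fra$ into a product of objects of $\C$; everything else reduces to elementary manipulations with submodules, quotients, and the ideal $\fra$. I expect no substantive obstacle, since the proposition is essentially a dictionary between two-sided ideals of $\La$ and full subcategories of $\Mod\La$ closed under the stated categorical operations.
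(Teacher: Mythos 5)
Your argument is correct in all three parts (the two implications of the characterisation and the criterion for being Serre); note that the paper offers no proof of its own here, merely citing Auslander, so there is nothing in the text to compare against, but your route is the standard one. The single point to make explicit is set-theoretic: the collection of all pairs $(X,x)$ with $X\in\C$ ranges over a proper class, so the ``suitable product'' must be chosen set-indexed --- for instance, for each $\lambda\in\La\setminus\fra$ pick a witness $X_\lambda\in\C$ and $x_\lambda\in X_\lambda$ with $x_\lambda\lambda\neq 0$, and use the map $\La\to\prod_{\lambda}X_\lambda$, whose kernel is exactly $\fra$. With that fix the embedding of $\La/\fra$ into an object of $\C$, and hence the inclusion $\Mod\La/\fra\subseteq\C$, goes through as you describe, and your extension argument together with the explicit sequence $0\to\fra/\fra^2\to\La/\fra^2\to\La/\fra\to 0$ (whose middle term has annihilator $\fra^2$) settles the Serre criterion in both directions.
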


Given an idempotent $e\in\La$, the inclusion
$i_*\colon \Mod \La/\La e\La\to\Mod\La$ and 
\[j^*:=\Hom_\La(e\La,-)\cong -\otimes_\La \La e\]
induce a recollement
\begin{equation}\label{eq:idempotent}
\begin{tikzcd}
  \Mod \La/\La e\La\arrow[tail]{rr}[description]{i_*} &&\Mod\La
  \arrow[twoheadrightarrow,yshift=-1.5ex]{ll}
  \arrow[twoheadrightarrow,yshift=1.5ex]{ll}
  \arrow[twoheadrightarrow]{rr}[description]{j^*} &&\Mod e\La e\, .
  \arrow[tail,yshift=-1.5ex]{ll}
  \arrow[tail,yshift=1.5ex]{ll}
\end{tikzcd}
\end{equation}
In fact, any recollement of module categories 
\[\begin{tikzcd}
  \Mod \La'\arrow[tail]{rr} &&\Mod\La
  \arrow[twoheadrightarrow,yshift=-1.5ex]{ll}
  \arrow[twoheadrightarrow,yshift=1.5ex]{ll}
  \arrow[twoheadrightarrow]{rr} &&\Mod \La''
  \arrow[tail,yshift=-1.5ex]{ll}
  \arrow[tail,yshift=1.5ex]{ll}
\end{tikzcd}
\]
is up to Morita equivalence of this form.  For each $\La$-module $X$
there is a natural exact sequence
\begin{equation}\label{eq:counit-idempotent}
  \Hom_\La(e\La,X)\otimes_{e\La e} e\La\xto{\ \e_X\ }
  X\lto  X\otimes_\La \La/\La e\La \lto  0.
\end{equation}
If $\La$ is right artinian, then \eqref{eq:idempotent} restricts to a
colocalisation sequence
\[
\begin{tikzcd}
  \mod \La/\La e\La\arrow[tail,yshift=-.75ex]{rr}&&\mod\La
  \arrow[twoheadrightarrow,yshift=.75ex]{ll}
   \arrow[twoheadrightarrow,yshift=-.75ex]{rr} &&\mod e\La e\, .
  \arrow[tail,yshift=.75ex]{ll}
 \end{tikzcd}
\]

\begin{lem}\label{le:Serre}
  Let $\La$ be a right artinian ring and $\C\subseteq\mod\La$ a Serre
  subcategory. Then there is an idempotent $e\in\La$ such that
  $\C=\mod \La/\La e\La$.  Moreover, the following holds:
\begin{enumerate}
\item The inclusion $\mod \La/\La e\La\to\mod\La$ admits a left and a right adjoint.
\item The functor $\Hom_\La(e\La,-)\colon\mod\La\to\mod e\La e$  admits a left adjoint. 
\item The functor $\Hom_\La(e\La,-)\colon\mod\La\to\mod e\La e$
  admits a right adjoint provided that $\mod\La$ has enough injective objects.
\end{enumerate}
\end{lem}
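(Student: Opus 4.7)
Since $\La$ is right artinian, fix a complete set $e_1,\dots,e_n$ of primitive orthogonal idempotents so that $P_i=e_i\La$ are the indecomposable projectives with pairwise non-isomorphic simple tops $S_i$. A Serre subcategory is closed under subobjects, quotients and extensions, so by Jordan--H\"older it is determined by the set of simples it contains. Setting $e=\sum_{S_i\notin\C}e_i$ and using the identification $Me_i\cong\Hom_\La(P_i,M)$, I would check that $Me=0$ iff $M$ has no composition factor $S_i\notin\C$, iff $M\in\C$; equivalently $M\La e\La=0$. Hence $\C=\mod\La/\La e\La$.

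For part (1), the left and right adjoints of the inclusion are $M\mapsto M/M\La e\La$ (a finitely generated quotient) and $M\mapsto\{x\in M:xe=0\}$ (a submodule of a finitely generated module), both in $\mod\La$ because $\La$ is right noetherian by Hopkins--Levitzki. For part (2), tensor--hom adjunction exhibits $j_!(N)=N\otimes_{e\La e}e\La$ as left adjoint of $j^*=\Hom_\La(e\La,-)\cong(-)\otimes_\La\La e$; any surjection $(e\La e)^r\twoheadrightarrow N$ induces $(e\La)^r\twoheadrightarrow j_!(N)$, placing $j_!(N)$ in $\mod\La$.

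For part (3), the candidate right adjoint is $j_*(N)=\Hom_{e\La e}(\La e,N)$, which provides the right adjoint of $j^*$ at the level of $\Mod\La$; the task is to verify $j_*(N)\in\mod\La$ whenever $N\in\mod e\La e$. The plan is to reduce to a special case via an injective copresentation. Using (2), $j_!(N)\in\mod\La$, so enough injectives in $\mod\La$ yields an embedding $j_!(N)\hookrightarrow J^0$ with $J^0$ injective; applying the exact functor $j^*$ together with the isomorphism $j^*j_!\cong\id$ produces a monomorphism $N\hookrightarrow j^*J^0$ in $\mod e\La e$. Iterating on the cokernel yields an exact sequence $0\to N\to j^*J^0\to j^*J^1$ with $J^0,J^1$ injective in $\mod\La$. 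Applying the left exact functor $j_*$ and exploiting right noetherianity of $\La$ reduces the claim to checking that $j_*j^*(J)\in\mod\La$ for each injective $J\in\mod\La$. For this one dualises $0\to\La e\La\to\La\to\La/\La e\La\to 0$ via $\Hom_\La(-,J)$: since $\Ext^1_\La(-,J)=0$, this gives $\Hom_\La(\La e\La,J)\cong J/\{x\in J:xe=0\}$, a finitely generated quotient of $J$. The main obstacle---and the precise point where the hypothesis in (3) carries its weight---is to identify $j_*j^*(J)$ with $\Hom_\La(\La e\La,J)$; this rests either on the multiplication map $\La e\otimes_{e\La e}e\La\to\La e\La$ being an isomorphism, or else on using the standard duality on $\mod\La$ (available precisely when $\mod\La$ has enough injective objects) to rewrite $\Hom_{e\La e}(\La e,N)$ as a dual of a finitely generated tensor product, whence finite generation is immediate.
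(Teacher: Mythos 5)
Your identification of $\C$ with $\mod\La/\La e\La$ and your treatment of (1) and (2) are essentially fine and close to the paper (which produces $e$ from the annihilator ideal, idempotent by Proposition~\ref{pr:annihilated}, rather than from the simple objects), apart from one slip in (1): the set $\{x\in M\mid xe=0\}$ is not a $\La$-submodule; the right adjoint sends $M$ to $\{x\in M\mid x\La e=0\}\cong\Hom_\La(\La/\La e\La,M)$, the largest submodule lying in $\C$.

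The genuine gap is in (3), exactly at the point you flag as the main obstacle, and neither of your proposed fixes closes it. The reduction to showing $j_*j^*(J)\in\mod\La$ for $J$ finitely generated injective is correct but gains nothing, because the identification $j_*j^*(J)\cong\Hom_\La(\La e\La,J)$ is false in general: the restriction map $\Hom_\La(\La e\La,J)\to\Hom_{e\La e}(\La e,Je)$ is injective but usually not surjective, since the multiplication map $\La e\otimes_{e\La e}e\La\to\La e\La$ need not be an isomorphism for an arbitrary idempotent ideal (it is for heredity ideals, not in general). For instance, take the self-injective algebra given by the quiver $1\rightleftarrows 2$ with all paths of length two equal to zero, $e=e_2$ and $J=\La$: then $\dim_k\Hom_\La(\La e\La,\La)=3$ while $\dim_k\Hom_{e\La e}(\La e,\La e)=4$. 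So branch (a) is a dead end. Branch (b) points in the right direction---it is in spirit the paper's proof---but it is not ``immediate'': over a general right artinian ring there is no commutative artinian base, hence no tensor--hom rewriting of $\Hom_{e\La e}(\La e,N)$ as a dual of a finitely generated tensor product; that manipulation is available for artin algebras only, and the asserted duality itself needs justification. The paper avoids computing $j_*$ altogether: choose a finitely generated injective cogenerator $E$, set $\Ga=\End_\La(E)^\op$, use the duality $(\mod\La)^\op\xto{\sim}\mod\Ga$ given by $\Hom_\La(-,E)$, and observe that a right adjoint of $\Hom_\La(e\La,-)$ corresponds to a left adjoint of the analogous functor on the dual side, which exists by (2). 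If you want to salvage your copresentation route instead, what would actually finish it is Gabriel's fact that a torsion-free injective module is closed for the localisation at $\{M\mid Me=0\}$, so that $j_*N\cong j_*j^*(j_!N)$ embeds into the finitely generated injective envelope of $j_!N$ modulo its torsion submodule; but that is a different argument from the one you wrote.
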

\begin{proof}
  The annihilator $\fra\subseteq\La$ of the modules in $\C$ is
  idempotent since $\C$ is closed under forming extension. Thus
  $\fra=\La e\La$ for some idempotent $e\in\La$.

(1) The right adjoint sends a $\La$-module $X$ to the maximal
submodule belonging to $\C$. The left adjoint sends  $X$ to the maximal
factor module belonging to $\C$.

(2) Take $-\otimes_{e\La e} e\La$.

(3) Let $E$ be an injective cogenerator and set
$\Ga=\End_\La(E)^\op$. Then we have $(\mod\La)^\op\xto{\sim}\mod\Ga$
via $\Hom_\La(-,E)$ and can apply (2).
\end{proof}

\begin{exm}
  Consider the right artinian ring $\La=\smatrix{\bbR&\bbR\\ 0&\bbQ}$
  and $e=\smatrix{0&0\\ 0&1}$. Then
  $\Hom_\La(e\La,-)\colon\mod\La\to\mod e\La e$ admits no right
  adjoint, because it would send $e\La e$ to $\Hom_{e\La e}(\La e,e\La e)$ which is not finitely generated
  over $\La$.
\end{exm}

We recall a well known criterion for a recollement of module
categories to be homological.

\begin{lem}\label{le:idempotent}
  Let $\La$ be a ring and $\fra\subseteq\La$ an ideal. Then the
  following are equivalent:
\begin{enumerate}
\item  $\La/\fra\otimes_\La\La/\fra\cong\La/\fra$ and
  $\Tor^\La_p(\La/\fra,\La/\fra)=0$ for all $p>0$.
\item $\Ext^p_{\La/\fra}(X,Y)\xto{\sim}\Ext^p_{\La}(X,Y)$ for all
  $\La/\fra$-modules $X,Y$ and $p\ge 0$.
\end{enumerate}
These conditions are satisfied when $\fra$ is a projective $\La$-module.
\end{lem}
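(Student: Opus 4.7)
The plan is to prove the two implications by comparing projective resolutions over $\La$ and over $\La/\fra$, and to treat the final assertion via the short exact sequence $0\to\fra\to\La\to\La/\fra\to 0$. The main obstacle throughout is keeping left- versus right-module structures straight; once the character-module duality is in play nothing genuinely difficult remains.

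For (1)$\Rightarrow$(2), I would first pick a projective resolution $P_\bullet\to\La/\fra$ of $\La/\fra$ as a right $\La$-module. The Tor-vanishing in (1) says precisely that $P_\bullet\otimes_\La\La/\fra\to\La/\fra$ is a resolution by projective right $\La/\fra$-modules. For any $Y\in\Mod\La/\fra$, tensor-Hom adjunction supplies an isomorphism of complexes $\Hom_\La(P_\bullet,Y)\cong\Hom_{\La/\fra}(P_\bullet\otimes_\La\La/\fra,Y)$, and passing to cohomology settles the case $X=\La/\fra$. For a general $X\in\Mod\La/\fra$, I choose a projective resolution $Q_\bullet\to X$ over $\La/\fra$; each $Q_i$ is a direct summand of a free $\La/\fra$-module, so the previous case (together with the fact that $\Ext$ in the first variable converts coproducts to products) yields $\Ext^p_\La(Q_i,Y)=0$ for $p>0$. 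Thus $Q_\bullet$ is a $\Hom_\La(-,Y)$-acyclic resolution of $X$, and since $\Hom_\La(Q_i,Y)=\Hom_{\La/\fra}(Q_i,Y)$ automatically, taking cohomology gives the desired equality of Ext groups.

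For (2)$\Rightarrow$(1), I would use the character-module duality $(-)^{*}=\Hom_\bbZ(-,\bbQ/\bbZ)$, which by injectivity of $\bbQ/\bbZ$ over $\bbZ$ supplies natural isomorphisms $\Tor^\La_p(M,N)^{*}\cong\Ext^p_\La(M,N^{*})$ and is faithful because $\bbQ/\bbZ$ is an injective cogenerator. Applied with $M=N=\La/\fra$, the module $N^{*}$ inherits the structure of a right $\La/\fra$-module (the left $\fra$-action on $N$ is zero), so (2) identifies $\Ext^p_\La(\La/\fra,N^{*})$ with $\Ext^p_{\La/\fra}(\La/\fra,N^{*})$, which vanishes for $p>0$ since $\La/\fra$ is projective over itself. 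Faithfulness of $(-)^{*}$ then forces $\Tor^\La_p(\La/\fra,\La/\fra)=0$ for $p>0$, while $\La/\fra\otimes_\La\La/\fra\cong\La/\fra$ is automatic because $(\La/\fra)\fra=0$. Finally, if $\fra$ is $\La$-projective then $0\to\fra\to\La\to\La/\fra\to 0$ is a length-one projective resolution, forcing $\Tor^\La_p(\La/\fra,\La/\fra)=0$ for $p\geq 2$ and identifying $\Tor^\La_1(\La/\fra,\La/\fra)$ with $\fra/\fra^2$; the latter vanishes under the idempotency $\fra^2=\fra$, which is automatic when $\fra=\La e\La$ arises from an idempotent.
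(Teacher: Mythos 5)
Your proof of the equivalence (1)$\Leftrightarrow$(2) is correct and self-contained, whereas the paper disposes of this part by citing \cite[Theorem~4.4]{GL1991}; what you have written is essentially the argument behind that citation. For (1)$\Rightarrow$(2) the key point is exactly as you say: the Tor-vanishing means that applying $-\otimes_\La\La/\fra$ to a $\La$-projective resolution $P_\bullet$ of $\La/\fra$ yields a $\La/\fra$-projective resolution, and adjunction identifies $\Hom_\La(P_\bullet,Y)$ with $\Hom_{\La/\fra}(P_\bullet\otimes_\La\La/\fra,Y)$; the bootstrap to arbitrary $X$ via a $\La/\fra$-projective resolution whose terms are $\Hom_\La(-,Y)$-acyclic is legitimate, granted the standard facts that $\Ext^p_\La(-,Y)$ sends coproducts to products and that resolutions by acyclics compute derived functors. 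The direction (2)$\Rightarrow$(1) via $\Tor^\La_p(M,N)^{*}\cong\Ext^p_\La(M,N^{*})$ and faithfulness of $(-)^{*}$ is also fine, and your observation that $\La/\fra\otimes_\La\La/\fra\cong\La/\fra$ holds automatically for a surjective ring map is accurate. So here you supply a proof where the paper supplies a reference; nothing is lost.

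The one point requiring attention is the final assertion. Your computation is right: projectivity of $\fra$ makes $0\to\fra\to\La\to\La/\fra\to0$ a length-one projective resolution, so $\Tor^\La_p(\La/\fra,\La/\fra)=0$ for $p\ge 2$, and since the induced map $\fra\otimes_\La\La/\fra\to\La\otimes_\La\La/\fra$ is zero, $\Tor^\La_1(\La/\fra,\La/\fra)\cong\fra\otimes_\La\La/\fra\cong\fra/\fra^2$. But you then kill this group by invoking $\fra^2=\fra$, justified only by the remark that this is ``automatic when $\fra=\La e\La$'' --- a hypothesis the lemma does not contain. As literally stated, the final assertion is false without idempotency: take $\La=\bbZ$ and $\fra=2\bbZ$, which is projective, yet $\Ext^1_{\bbZ/2\bbZ}(\bbZ/2\bbZ,\bbZ/2\bbZ)=0$ while $\Ext^1_{\bbZ}(\bbZ/2\bbZ,\bbZ/2\bbZ)\ne 0$. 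The paper's own proof carries the same silent assumption (its claim that $\Tor^\La_{*}(\fra,\La/\fra)=0$ in degree $0$ is precisely the statement $\fra=\fra^2$), and in every application in the paper $\fra=\La e\La$ is indeed idempotent, so nothing breaks downstream. Still, you should say explicitly that your argument establishes the assertion under the additional --- and necessary --- hypothesis that $\fra$ is idempotent, rather than folding that hypothesis in as though it were given.
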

\begin{proof}
  For the first part, see \cite[Theorem~4.4]{GL1991}. Now suppose that
  $\fra$ is projective. This implies $\Tor^\La_*(\fra,\La/\fra)=0$, and the
  exact sequence $0\to\fra\to\La\to\La/\fra\to 0$ induces an
  isomorphism $\Tor^\La_*(\La/\fra,\La/\fra)\cong\La/\fra$. Thus (1) holds.
\end{proof}

\subsection*{Abelian length categories}

Let $\A$ be an abelian \emph{length category}. Thus $\A$ is an abelian
category and every object in $\A$ has a finite composition series.

Recall that $\A$ is
\emph{Ext-finite} if for every pair of simple objects $S$ and $T$
\[\dim_{\End_\A(T)^\op}\Ext_\A^1(S,T)<\infty.\] 
This property is useful for constructing projective generators.

\begin{prop}[{\cite[8.2]{Ga1973}}]
\label{pr:Gabriel}
An abelian length category $\A$ is equivalent to the category $\mod\La$ of finitely generated
$\La$-modules for some right artinian ring $\La$ if and only if the
following holds:
\begin{enumerate}
\item $\A$ has only finitely many simple objects.
\item $\A$ is Ext-finite.
\item The supremum of the Loewy lengths of the objects in $\A$ is
  finite.\qed
\end{enumerate}
\end{prop}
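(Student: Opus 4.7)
The forward direction is routine. If $\A = \mod \La$ for some right artinian $\La$, then simple modules correspond to simples of the semisimple quotient $\La/\rad\La$ (giving (1)), Ext-finiteness of $\Ext^1_\La(S,T)$ between simples is standard (computed from a finite projective resolution), and bounded Loewy length follows from $\rad(\La)^N = 0$ for some $N$, which forces the Loewy length of every module to be at most $N$.

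For the converse, the plan is to construct a projective generator $P$ of $\A$, set $\La := \End_\A(P)^{\op}$, and show that $\Hom_\A(P,-)\colon \A \to \mod\La$ is an equivalence. The crucial step is to build a projective cover $P_i$ of each of the finitely many simples $S_i$. Starting from $P_i^{(0)} := S_i$, at each stage I would use Ext-finiteness (2) to choose for every $j$ a finite generating set of $\Ext^1_\A(P_i^{(n)}, S_j)$ as a right $\End_\A(S_j)$-module, and form the associated universal extension
\[
0 \to M^{(n)} \to P_i^{(n+1)} \to P_i^{(n)} \to 0
\]
with $M^{(n)}$ semisimple, arranged so that the image of $\Ext^1_\A(P_i^{(n)}, S_j)$ in $\Ext^1_\A(P_i^{(n+1)}, S_j)$ vanishes for every $j$. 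The new simples sit at the bottom of the radical filtration, so the Loewy length of $P_i^{(n+1)}$ strictly exceeds that of $P_i^{(n)}$ whenever the extension is nontrivial; hypothesis (3) therefore forces termination with a projective $P_i$ of top $S_i$.

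With $P := \bigoplus_i P_i$ and $\La := \End_\A(P)^{\op}$, the right $\La$-module $\La = \Hom_\A(P,P)$ inherits the bounded Loewy length of $P$ (since $\rad^k \La$ consists of endomorphisms factoring through $\rad^k P$), so $\La$ is right artinian. The exact functor $\Hom_\A(P,-)$ sends the $P_i$ to the indecomposable projectives of $\mod\La$, is faithful because $P$ generates $\A$, and extends to an equivalence on all of $\mod\La$ by a standard five-lemma argument applied to projective presentations. The main obstacle will be the termination of the projective-cover construction: verifying that universal extensions strictly raise the Loewy length at each nontrivial stage is the delicate point where hypothesis (3) does its essential work, while (1) and (2) are needed only to ensure that the inductive step makes sense and that the sum $\bigoplus_i P_i$ is finite.
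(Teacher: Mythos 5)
The paper offers no proof of this proposition --- it is cited from Gabriel's paper and stated without proof --- so your sketch can only be measured against the standard argument, which it follows in outline: the forward direction, the projectivization step at the end, and the overall plan (iterated universal extensions building projective covers of the simples, with (2) keeping each step finite and (3) forcing termination) are all the right ideas.

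The genuine gap sits precisely at the step you flag as delicate. The assertion that ``the new simples sit at the bottom of the radical filtration, so the Loewy length of $P_i^{(n+1)}$ strictly exceeds that of $P_i^{(n)}$'' is \emph{false} for a universal extension of an arbitrary object, so it cannot simply be asserted. For instance, for representations of the commutative square (arrows $1\to 2$, $1\to 3$, $2\to 4$, $3\to 4$ with the commutativity relation), the object $X=P_1/\langle b\rangle$ obtained by factoring out the submodule generated by the arrow $b\colon 1\to 3$ has Loewy length $2$ and $\Ext^1_\A(X,S_3)=k$, but its universal extension is $0\to S_3\to P_1/\rad^2 P_1\to X\to 0$, whose middle term again has Loewy length $2$: the new copy of $S_3$ is glued into the existing second radical layer and no new layer appears. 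What rescues your construction is its recursive structure, and this has to be proved: because the connecting map $\Hom_\A(M^{(n-1)},S_j)\to\Ext^1_\A(P_i^{(n-1)},S_j)$ is surjective, the restriction map $\Ext^1_\A(P_i^{(n)},S_j)\to\Ext^1_\A(M^{(n-1)},S_j)$ is \emph{injective} for every $j$, i.e.\ every extension of $P_i^{(n)}$ by a simple is detected on the layer added last; combined with minimality of the chosen generating sets (which you should impose --- otherwise simple summands split off and $P_i^{(n)}$ loses its simple top) this gives, by induction, $M^{(n)}\subseteq\rad^{n+1}P_i^{(n+1)}$, and only then does the Loewy length grow at every nontrivial stage so that (3) stops the process. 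Two smaller omissions: you need the routine d\'evissage showing that $\Ext^1_\A(X,S_j)$ is finite-dimensional over $\End_\A(S_j)$ for \emph{all} finite-length $X$ (hypothesis (2) speaks only of simple $X$), and at termination the vanishing of $\Ext^1_\A(P_i,S_j)$ for all $j$ must be promoted to vanishing of $\Ext^1_\A(P_i,-)$ on all of $\A$ by induction on composition length.
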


\section{Highest weight categories}\label{se:hwt}

Highest weight categories were introduced by Cline, Parshall, and
Scott \cite{CPS1988} in the context of $k$-linear categories over a
field $k$. The definition given here uses a slightly different
formulation which follows Rouquier \cite{Ro2008}.  Also, our definition is more
general since the endomorphism ring of a standard object can be any
division ring. For simplicity, we restrict ourselves to the case that
the set of weights is finite and totally ordered.

Let $\De_1,\ldots,\De_n$ be objects in an abelian category $\A$.  We
write $\Filt(\De_1,\ldots,\De_n)$ for the full subcategory of objects
$X$ in $\A$ that admit a finite filtration
$0=X_0\subseteq X_1\subseteq\ldots\subseteq X_t=X$ such that each
factor $X_i/X_{i-1}$ is isomorphic to an object of the form
$\De_j$. Also, let $\Serre(\De_1,\ldots,\De_n)$ denote the smallest
Serre subcategory of $\A$ containing $\De_1,\ldots,\De_n$.

Recall that a projective object $P$ of an abelian (or exact) category is a
\emph{projective generator} if for every object $X$ there is an
exact sequence $0\to N\to P^r\to X\to 0$ for some positive integer
$r$.

\begin{defn}\label{de:hwt}
  Let $\A$ be an abelian length category having only finitely many
  isoclasses of simple objects. Then $\A$ is called \emph{highest
    weight category} if there are finitely many exact sequences
\begin{equation}\label{eq:hwt}
0\lto U_i\lto P_i\lto \De_i \lto 0\qquad (1\le i\le n)
\end{equation}
  in $\A$ satisfying the following:
\begin{enumerate}
\item $\End_\A(\De_i)$ is a division ring for all $i$.
\item $\Hom_\A(\De_i,\De_j)=0$ for all $i> j$. 
\item $U_i$ belongs to $\Filt(\De_{i+1},\ldots,\De_n)$ for all $i$.
\item $\bigoplus_{i=1}^n P_i$ is a projective generator of $\A$.
\end{enumerate}
The objects $\De_1,\ldots,\De_n$ are called \emph{standard objects}. 
\end{defn}

Now fix a highest weight category $\A$ with standard objects
$\De_1,\ldots,\De_n$. Let $P$ denote a projective generator and set
$\La=\End_\A(P)$. We identify $\A=\mod\La$ via $\Hom_\A(P,-)$.  Set
$\Ga=\End_\La(\De_n)$ and note that $\De_n$ is projective. For each
$\La$-module $X$ there is a natural exact sequence
\begin{equation}\label{eq:counit}
  \Hom_\La(\De_n,X)\otimes_\Ga \De_n\xto{\ \e_X\ } X\lto \bar X\lto 0.
\end{equation}

Note that $\Ker\e_X$ and $\bar X$ are annihilated by
$\Hom_\La(\De_n,-)$ since $\Hom_\La(\De_n,\e_X)$ is invertible. The
homomorphism $\La\to \bar\La$ identifies via restriction of scalars
\[\mod\bar\La=\{X\in \mod\La\mid\Hom_\La(\De_n,X)=0\}\]
and $\bar X\cong X\otimes_\La\bar\La$ for all $X\in\mod\La$.

\begin{lem}\label{le:filtered}
\begin{enumerate}
\item The counit $\e_X$ is a monomorphism for $X$ in $\Filt(\De_{1},\ldots,\De_n)$. 
\item The assignment $X\mapsto \bar X$ provides an exact left adjoint of
the inclusion $\Filt(\De_{1},\ldots,\De_{n-1})\to \Filt(\De_{1},\ldots,\De_n)$.
\end{enumerate}
\end{lem}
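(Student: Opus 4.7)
The plan is to prove both parts simultaneously by induction on the length of a filtration by standard objects, with the snake lemma as the main tool. The crucial observations are that $\Delta_n$ is projective (take $i=n$ in Definition~\ref{de:hwt}(3): then $U_n$ is filtered by the empty family, so $U_n = 0$ and $P_n \cong \Delta_n$), and that $\Gamma = \End_\Lambda(\Delta_n)$ is a division ring, so every $\Gamma$-module is free and $-\otimes_\Gamma \Delta_n$ is exact. Together these say that the functor $X \mapsto K_X := \Hom_\Lambda(\Delta_n,X)\otimes_\Gamma \Delta_n$ is exact.

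For (1), the base cases are $X = \Delta_j$: if $j<n$ then $\Hom_\Lambda(\Delta_n,\Delta_j)=0$ by Definition~\ref{de:hwt}(2), so $K_X=0$ and $\varepsilon_X$ is trivially mono; if $j=n$ then $\varepsilon_{\Delta_n}$ is an isomorphism. For the inductive step, pick a short exact sequence $0 \to X' \to X \to \Delta_{j_t} \to 0$ from the last step of a filtration of $X$, yielding a commutative diagram
\[
\begin{tikzcd}
0 \arrow{r} & K_{X'} \arrow{r}\arrow{d}{\varepsilon_{X'}} & K_X \arrow{r}\arrow{d}{\varepsilon_X} & K_{\Delta_{j_t}} \arrow{r}\arrow{d}{\varepsilon_{\Delta_{j_t}}} & 0 \\
0 \arrow{r} & X' \arrow{r} & X \arrow{r} & \Delta_{j_t} \arrow{r} & 0
\end{tikzcd}
\]
with exact rows. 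The snake lemma produces an exact sequence of kernels, and since $\ker\varepsilon_{X'}=0$ by induction and $\ker\varepsilon_{\Delta_{j_t}}=0$ by the base case, we conclude $\ker\varepsilon_X=0$.

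For (2), the same snake lemma gives an exact sequence of cokernels
\[
0 \to \overline{X'} \to \bar X \to \overline{\Delta_{j_t}} \to 0.
\]
If $j_t < n$ then $\overline{\Delta_{j_t}} = \Delta_{j_t}$, so $\bar X$ is an extension of two objects in $\Filt(\Delta_1,\ldots,\Delta_{n-1})$ (using induction); if $j_t = n$ then $\overline{\Delta_n}=0$ and $\bar X \cong \overline{X'}$. Either way $\bar X \in \Filt(\Delta_1,\ldots,\Delta_{n-1})$. Exactness of the assignment $X \mapsto \bar X$ on $\Filt(\Delta_1,\ldots,\Delta_n)$ follows from the same snake lemma diagram applied to an arbitrary short exact sequence in $\Filt(\Delta_1,\ldots,\Delta_n)$, since all three vertical $\varepsilon$'s are monomorphisms by part (1).

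For the adjunction, I first observe that any $Y \in \Filt(\Delta_1,\ldots,\Delta_{n-1})$ satisfies $\Hom_\Lambda(\Delta_n, Y)=0$: induct on a filtration of $Y$, using exactness of $\Hom_\Lambda(\Delta_n,-)$ together with Definition~\ref{de:hwt}(2) for the factors $\Delta_k$ with $k<n$. Consequently any morphism $X \to Y$ vanishes on $K_X$ and factors uniquely through $\bar X$, yielding $\Hom_\A(\bar X, Y) \cong \Hom_\A(X,Y)$. The main subtlety is ensuring that the top row of the snake diagram is genuinely short exact, which is where the two structural facts about $\Delta_n$ (projectivity, and $\Gamma$ being a division ring) are essential.
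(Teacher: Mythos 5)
Your proof is correct and follows essentially the same strategy as the paper: induction on the length of a $\De$-filtration, exactness of the functor $X\mapsto\Hom_\La(\De_n,X)\otimes_\Ga\De_n$ (from projectivity of $\De_n$ and $\Ga$ being a division ring), and the snake lemma. The only organizational difference is that the paper first uses the projectivity of $\De_n$ to collect all $\De_n$-factors at the bottom of the filtration, producing $0\to\De_n^r\to X\to X'\to 0$ with $X'\in\Filt(\De_1,\ldots,\De_{n-1})$ and identifying $\e_X$ with the inclusion $\De_n^r\to X$ outright, whereas you peel off one top factor at a time; both are valid, and your verification of the universal property of the left adjoint is in fact more explicit than the paper's.
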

\begin{proof}
  An induction on the length of a filtration of an object $X$ in
  $\Filt(\De_{1},\ldots,\De_n)$ yields some $r\ge 0$ and an exact
  sequence $0\to\De_n^r\to X\to X'\to 0$ with $X'$ in
  $\Filt(\De_{1},\ldots,\De_{n-1})$. Then we have
  $\Hom_\La(\De_n,X)\otimes_\Ga \De_n\cong\De_n^r$ and
  $\bar X\cong X'$. The exactness follows from the snake lemma since
  $\Hom_\La (\De_n ,-) \otimes_\Ga\De_n$ is exact.
\end{proof}

\begin{lem}\label{le:reduction}
  Let $\A$ be a highest weight category with standard objects
  $\De_1,\ldots,\De_n$. For the full subcategory
  $\bar\A=\{X\in \A\mid\Hom_\A(\De_n,X)=0\}$ the following holds:
\begin{enumerate}
\item $\bar\A$ is a highest weight category with standard objects
  $\De_1,\ldots,\De_{n-1}$.
\item The inclusion $\bar\A\to\A$ induces isomorphisms
$\Ext^p_{\bar\A}(X,Y)\xto{\sim}\Ext^p_{\A}(X,Y)$ for all $X,Y$ in
$\bar\A$ and $p\ge 0$.
\end{enumerate}
\end{lem}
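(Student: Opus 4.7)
The plan is to use the Morita identification $\A = \mod\La$ with $\La = \End_\A(P)$ already in force. Since $\Filt(\De_{n+1}, \ldots, \De_n)$ is zero, the defining sequence forces $U_n = 0$, so $\De_n = P_n$ is projective and corresponds under the equivalence to $e\La$ for an idempotent $e \in \La$; set $\Ga = e\La e$ and $\bar\La = \La/\La e\La$, so $\bar\A = \mod\bar\La$. For part (2), Lemma \ref{le:idempotent} reduces the desired Ext-isomorphisms to the statement that the ideal $\fra = \La e\La$ is projective as a right $\La$-module, and I expect the main obstacle to be establishing precisely this projectivity.

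To do so, I will show $\La e = \Hom_\La(\De_n,\La)$ is free as a left $\Ga$-module. Each $P_i$ admits a finite filtration with quotients among $\De_i, \ldots, \De_n$ (extend the given filtration of $U_i$ by $\De_i$), so $\La = \bigoplus_i P_i$ lies in $\Filt(\De_1, \ldots, \De_n)$. The functor $\Hom_\La(\De_n,-)$ is exact, so applying it to such a filtration of $\La$ yields a filtration of $\La e$ whose successive quotients are $\Hom_\La(\De_n, \De_j)$; these vanish for $j < n$ by condition (2) of Definition \ref{de:hwt}, and equal $\Ga$ for $j = n$. Since $\Ga$ is a division ring, every left $\Ga$-module is free, so $\La e$ is free over $\Ga$. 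Hence $\La e \otimes_\Ga e\La$ is a direct sum of copies of the projective right $\La$-module $e\La$, and is itself projective. Now Lemma \ref{le:filtered}(1), applied to $X = \La$, tells us the counit $\e_\La \colon \La e \otimes_\Ga e\La \to \La$ is a monomorphism with image $\La e\La$; therefore $\La e\La$ is projective, settling part (2).

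For part (1), the category $\bar\A = \mod\bar\La$ is automatically an abelian length category with finitely many isoclasses of simples, since $\bar\La$ is a quotient of the artinian ring $\La$. The standard objects $\De_1, \ldots, \De_{n-1}$ belong to $\bar\A$ because $\Hom_\A(\De_n, \De_i) = 0$ for $i < n$ by condition (2), and axioms (1) and (2) of Definition \ref{de:hwt} transfer verbatim. Applying the exact left adjoint $X \mapsto \bar X$ of Lemma \ref{le:filtered}(2) to the defining sequence $0 \to U_i \to P_i \to \De_i \to 0$ for each $i \le n-1$ produces an exact sequence $0 \to \bar U_i \to \bar P_i \to \De_i \to 0$ in $\bar\A$, using $\bar\De_j = \De_j$ for $j < n$ and $\bar\De_n = 0$; in particular $\bar U_i \in \Filt(\De_{i+1}, \ldots, \De_{n-1})$. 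Each $\bar P_i \cong P_i \otimes_\La \bar\La$ is a direct summand of $\bar\La$ and therefore projective in $\bar\A$, while $\bigoplus_{i<n} \bar P_i = \bar\La$ (because $\bar P_n = 0$) is a projective generator of $\bar\A$. This yields axioms (3) and (4), completing part (1). The heart of the argument is really the bridge provided by Lemma \ref{le:filtered}(1), which turns the combinatorial data of standard filtrations into the homological statement that the ideal $\La e\La$ is projective.
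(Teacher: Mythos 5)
Your proof is correct and follows essentially the same route as the paper: part (1) by applying the exact left adjoint $X\mapsto\bar X$ of Lemma~\ref{le:filtered}(2) to the defining sequences, and part (2) by reducing, via Lemma~\ref{le:idempotent}, to the projectivity of $\fra=\La e\La$, which both arguments obtain from the monomorphism $\e_\La$ of Lemma~\ref{le:filtered}(1) together with the identification of its source as a direct sum of copies of $\De_n$. Your explicit computation that $\La e$ is free over $\Ga$ (using exactness of $\Hom_\La(\De_n,-)$ on a $\De$-filtration of $\La$ and $\Hom_\A(\De_i,\De_j)=0$ for $i>j$) is just a spelled-out version of what the paper extracts from the proof of Lemma~\ref{le:filtered}.
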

\begin{proof} (1)
Applying the assignment $X\mapsto \bar X$ to \eqref{eq:hwt}
  yields exact sequences
\begin{equation}\label{eq:hwt2} 
0\lto \bar U_i\lto \bar P_i\lto\De_i \lto 0\qquad (1\le i\le n-1)
\end{equation}
with $\bar U_i$ in $\Filt(\De_{i+1},\ldots,\De_{n-1})$ by Lemma~\ref{le:filtered}.  It
remains to observe that $\bigoplus_{i=1}^{n-1} \bar P_i$ is a
projective generator of $\bar\A$.

(2) Let $\fra$ denote the kernel of $\La\to\bar\La$. This is a
projective $\La$-module because it is a direct sum of copies of
$\De_n$ by Lemma~\ref{le:filtered}. Thus the assertion follows from
Lemma~\ref{le:idempotent}. Alternatively, use
Proposition~\ref{pr:derived}.
\end{proof}

The following result establishes the precise connection between
highest weight categories and recollements of abelian categories with
semisimple factors.  For a similar result involving recollements of
derived categories, see \cite[Theorem~5.13]{PS1988}.

\begin{thm}\label{th:hwt}
  Let $\A$ be an abelian length category with finitely many simple
  objects. Suppose that $\A$ and $\A^\op$ are Ext-finite. Then the
  following are equivalent:
\begin{enumerate}
\item The category $\A$ is a highest weight category.
\item There is a finite chain of full subcategories\[0=\A_0\subseteq \A_1\subseteq
  \ldots\subseteq\A_n=\A\]
and a sequence of division rings $\Ga_1,\ldots,\Ga_n$ such that each
inclusion $\A_{i-1}\to\A_i$ induces a homological recollement of abelian categories
\begin{equation}\label{eq:rec_hwt}
\begin{tikzcd}
  \A_{i-1} \arrow[tail]{rr}&&\A_i
  \arrow[twoheadrightarrow,yshift=-1.5ex]{ll}
  \arrow[twoheadrightarrow,yshift=1.5ex]{ll}
  \arrow[twoheadrightarrow]{rr} &&\mod \Ga_i\, .
  \arrow[tail,yshift=-1.5ex]{ll}
  \arrow[tail,yshift=1.5ex]{ll}
\end{tikzcd}
\end{equation}
\end{enumerate}
In that case the standard objects $\De_1,\ldots,\De_n$ in $\A$ are
obtained by applying the left adjoint $\mod\Ga_i\to\A_i$ to $\Ga_i$
for $1\le i\le n$. Conversely, the subcategories $\A_i\subseteq\A$ are
obtained by setting $\A_i=\Serre(\De_1,\ldots,\De_i)$ or recursively
$\A_{i-1}=\{X\in\A_i\mid \Hom_\A(\De_i,X)=0\}$.
\end{thm}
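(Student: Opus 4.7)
The plan is to prove both implications by induction on $n$, using Lemma~\ref{le:reduction} and Lemma~\ref{le:idempotent} as the main technical tools.

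For $(1)\Rightarrow(2)$, assume $\A$ is highest weight with standards $\De_1,\ldots,\De_n$ and set $\A_{n-1}=\{X\in\A\mid\Hom_\A(\De_n,X)=0\}$. By Lemma~\ref{le:reduction}, $\A_{n-1}$ is itself highest weight with standards $\De_1,\ldots,\De_{n-1}$, and the inclusion $\A_{n-1}\hookrightarrow\A$ is a homological embedding. Writing $\A=\mod\La$ via a projective generator, the condition $U_n\in\Filt(\emptyset)=0$ forces $\De_n$ to be a projective summand of $\La$, corresponding to an idempotent $e_n\in\La$ with $e_n\La e_n\cong\Ga_n:=\End_\A(\De_n)$ and $\A_{n-1}\cong\mod\La/\La e_n\La$. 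The standard idempotent recollement~\eqref{eq:idempotent}, restricted to $\mod$ via Lemma~\ref{le:Serre}, yields the recollement~\eqref{eq:rec_hwt} at step $n$; iteration supplies the full chain.

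For $(2)\Rightarrow(1)$, induct on $n$. The base case $n=1$ is trivial ($\A_1=\mod\Ga_1$, $\De_1=\Ga_1$, $P_1=\De_1$, $U_1=0$). For the step, assume $\A_{n-1}$ is highest weight with projective generator $\bigoplus Q_i$ and exact sequences $0\to U'_i\to Q_i\to\De_i\to 0$ with $U'_i\in\Filt(\De_{i+1},\ldots,\De_{n-1})$. In $\A_n$, set $\De_n:=j_!(\Ga_n)$: full faithfulness of $j_!$ gives $\End_{\A_n}(\De_n)=\Ga_n$, exactness of $j^*$ makes $\De_n$ projective, and $\Hom_{\A_n}(\De_n,\De_i)=\Hom_{\mod\Ga_n}(\Ga_n,j^*\De_i)=0$ for $i<n$. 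For $i<n$, construct $P_i$ by the universal extension
\[
0\to V_i\otimes_{\Ga_n}\De_n\to P_i\to Q_i\to 0,\qquad V_i:=\Ext^1_{\A_n}(Q_i,\De_n),
\]
where $V_i$ is finite-dimensional by Ext-finiteness; set $P_n:=\De_n$. Pulling back $Q_i\to\De_i$ along $P_i\to Q_i$ produces $0\to U_i\to P_i\to\De_i\to 0$ with $U_i$ an extension of $U'_i$ by $V_i\otimes\De_n$, hence $U_i\in\Filt(\De_{i+1},\ldots,\De_n)$.

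The main obstacle is to verify that each $P_i$ is projective in $\A_n$. The universal property gives $\Ext^1_{\A_n}(P_i,\De_n)=0$; the homological hypothesis combined with projectivity of $Q_i$ in $\A_{n-1}$ gives $\Ext^p_{\A_n}(Q_i,Y)=0$ for $Y\in\A_{n-1}$ and $p\ge 1$; and projectivity of $\De_n$ gives $\Ext^p_{\A_n}(\De_n,-)=0$. Together these yield $\Ext^1_{\A_n}(P_i,Y)=0$ whenever $Y\in\A_{n-1}$ or $Y\in\add\De_n$. For a general $Y$, note that $j_!$ is exact (since $\mod\Ga_n$ is semisimple), so $j_!j^*Y\cong\De_n^r$; applying $\Ext(P_i,-)$ to the colocalisation sequence $0\to K\to j_!j^*Y\to Y\to i_*i^*Y\to 0$ (with $K,i_*i^*Y\in\A_{n-1}$), the long exact sequences reduce $\Ext^1_{\A_n}(P_i,Y)$ to $\Ext^2_{\A_n}(P_i,K)$, which vanishes by the same recipe. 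Once projectivity is established, $\bigoplus_{i=1}^n P_i$ is a projective generator (it covers every simple of $\A_n$), so $\A_n$ is highest weight. The asserted identifications $\De_i=j_{i!}(\Ga_i)$ and $\A_i=\Serre(\De_1,\ldots,\De_i)=\{X\in\A_{i+1}\mid\Hom(\De_{i+1},X)=0\}$ are built into the construction at each level.
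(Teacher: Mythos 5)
Your overall strategy coincides with the paper's: for $(1)\Rightarrow(2)$ you peel off $\De_n$ via Lemma~\ref{le:reduction}, and for $(2)\Rightarrow(1)$ you set $\De_n=j_!(\Ga_n)$, build $P_i$ by a universal extension of $Q_i$ by copies of $\De_n$, and prove projectivity of $P_i$ by splitting the four-term colocalisation sequence $0\to K\to j_!j^!Y\to Y\to i_*i^*Y\to 0$ into two short exact sequences. That core argument is correct and is exactly the one in the paper. Two steps, however, are asserted rather than proved, and both are precisely the places where the hypotheses of the theorem are consumed.

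First, in $(1)\Rightarrow(2)$ you claim that Lemma~\ref{le:Serre} restricts the idempotent recollement \eqref{eq:idempotent} to a recollement of the finitely generated module categories. For a right artinian ring one gets for free only a \emph{colocalisation} sequence on $\mod$; the missing right adjoint of $\Hom_\La(e\La,-)$ exists, by Lemma~\ref{le:Serre}(3), only when $\mod\La$ has enough injectives, and the Example following that lemma shows this can genuinely fail. This is exactly where the hypothesis that $\A^\op$ is Ext-finite enters (via Proposition~\ref{pr:Gabriel} it yields enough injectives); your proof never invokes it, so as written you have produced a colocalisation sequence, not the recollement demanded in (2). Second, you justify the finiteness of $V_i=\Ext^1_{\A_n}(Q_i,\De_n)$ over $\Ga_n$ ``by Ext-finiteness'', but Ext-finiteness is defined only for pairs of \emph{simple} objects, and neither $Q_i$ nor $\De_n$ is simple. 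One needs the reduction the paper makes: $\rad\De_n$ lies in $\A_{n-1}$, so $\Ext^1_{\A_n}(Q_i,\De_n)\cong\Ext^1_{\A_n}(Q_i,\De_n/\rad\De_n)$ because $\Ext^{p}_{\A_n}(Q_i,-)$ vanishes on $\A_{n-1}$ for $p>0$ (homologicity plus projectivity of $Q_i$ there); only then does Ext-finiteness, applied after filtering $Q_i$ by simples, give a finite-dimensional $V_i$. Both gaps are repairable with short arguments, but they are real: the universal extension cannot be formed without the second, and the first is needed for the statement of (2) to be met at all. The concluding identifications ($\bigoplus P_i$ a projective generator, $\A_t=\Serre(\De_1,\ldots,\De_t)$) are stated without the short inductive verification the paper gives, but these are minor by comparison.
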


\begin{rem} 
(1) The assertion of Theorem~\ref{th:hwt} remains true if one requires
each $\Ga_i$ to be a semisimple ring.

(2) The number $n$ in Theorem~\ref{th:hwt} equals the number of
pairwise non-isomorphic simple objects in $\A$ and the $\Ga_i$ are
their endomorphism rings.

(3) Each recollement \eqref{eq:rec_hwt} restricts to a diagram of
exact functors
\[
\begin{tikzcd}
  \Filt(\De_1,\ldots,\De_{i-1}) \arrow[tail,
  yshift=-0.75ex]{rr}&&\Filt(\De_1,\ldots,\De_{i})
  \arrow[twoheadrightarrow,yshift=0.75ex]{ll}
  \arrow[twoheadrightarrow, yshift=-0.75ex]{rr} &&\Filt(\De_i)\, .
  \arrow[tail,yshift=0.75ex]{ll}
\end{tikzcd}
\]

(4) Each recollement \eqref{eq:rec_hwt} induces for the corresponding
bounded derived categories a recollement of triangulated categories
\[
\begin{tikzcd}
\bfD^b(\A_{i-1}) \arrow[tail]{rr}&&\bfD^b(\A_i)
  \arrow[twoheadrightarrow,yshift=-1.5ex]{ll}
  \arrow[twoheadrightarrow,yshift=1.5ex]{ll}
  \arrow[twoheadrightarrow]{rr} &&\bfD^b(\mod \Ga_i)\, .
  \arrow[tail,yshift=-1.5ex]{ll}
  \arrow[tail,yshift=1.5ex]{ll}
\end{tikzcd}
\]
This follows, for example, from \cite[Lemme~2.1.3]{Il1971}.
\end{rem}

\begin{proof}
  (1) $\Rightarrow$ (2): Suppose $\A$ is a highest weight category
  with standard objects $\De_1,\ldots,\De_n$. Observe that $\A$ has
  enough injective objects by Proposition~\ref{pr:Gabriel}, since
  $\A^\op$ is Ext-finite. We give a recursive construction of a chain
  \[0=\A_0\subseteq\A_1\subseteq\ldots\subseteq\A_n=\A\] of full
  subcategories satisfying the conditions in (2). Let $\A_{n-1}$
  denote the full subcategory of objects $X$ in $\A$ such that
  $\Hom_\A(\De_n,X)=0$ and set $\Ga_n=\End_\A(\De_n)$. The object
  $\De_n$ is projective and $\Hom_\A(\De_n,-)$ induces a recollement
\[
\begin{tikzcd}
  \A_{n-1} \arrow[tail]{rr}&&\A
  \arrow[twoheadrightarrow,yshift=-1.5ex]{ll}
  \arrow[twoheadrightarrow,yshift=1.5ex]{ll}
  \arrow[twoheadrightarrow]{rr} &&\mod \Ga_n
  \arrow[tail,yshift=-1.5ex]{ll}
  \arrow[tail,yshift=1.5ex]{ll}
\end{tikzcd}
\]
by Lemma~\ref{le:Serre}.  In Lemma~\ref{le:reduction} it is shown that
the recollement is homological and that $\A_{n-1}$ is a highest weight
category.

(2) $\Rightarrow$ (1): Fix a chain of full subcategories
$\A_i\subseteq\A$ satisfying the conditions in (2). We show by
induction on $n$ that $\A$ is a highest weight category. Let $\De_n$
denote the image of $\Ga_n$ under the left adjoint
$\mod\Ga_n\to\A$. Clearly, $\End_\A(\De_n)\cong\Ga_n$ and $\De_n$ is a
projective object. The induction hypothesis for $\A_{n-1}$ yields a
collection of exact sequences \eqref{eq:hwt2}. We modify them as
follows to obtain exact sequences \eqref{eq:hwt}.  

Fix $1\le t <n$. Observe that $\De_n/\rad\De_n$ is a simple object and
that
\[\Ext^1_\A(\bar P_t,\De_n)\xto{\sim} \Ext^1_\A(\bar P_t,\De_n/\rad\De_n)\] since
$\rad\De_n$ belongs to $\A_{n-1}$.  Using the Ext-finiteness of $\A$,
we can form the universal extension
\begin{equation}\label{eq:universal}
0\lto \De_n^r\lto P_t\lto\bar P_t\lto 0
\end{equation}
in $\A$, that is, the induced map
$\Hom_\A(\De_n^r,\De_n)\to\Ext_\A^1(\bar P_t,\De_n)$ is
surjective. This implies $\Ext^1_\A(P_t,\De_n)=0$.

We claim that $P_t$ is a projective object. First observe that for any
object $X$ in $\A$, the recollement \eqref{eq:rec_hwt} yields
an exact sequence
\begin{equation}\label{eq:co-loc}
0\lto \Ker\e_X\lto j_!j^!(X)\xto{\ \e_X\ } X\lto i_*i^*(X)\lto 0
\end{equation} 
with $\Ker\e_X$ in the image of $i_*$, since $j^!(\e_X)$ is invertible
(using the notation of \eqref{eq:rec}). The functor $\Ext^p_\A(P_t,-)$
vanishes for all $p > 0$ on the image of $i_*$ because the recollement
is homological, and $\Ext^1_\A(P_t,-)$ vanishes on the image of $j_!$
since $\Ext^1_\A(P_t,\De_n)=0$.  Now one applies the sequence
\eqref{eq:co-loc} by writing it as composite of two exact sequences
\[0\to \Ker\e_X\to j_!j^!(X)\to X'\to 0\quad\text{and}\quad 0\to X'\to X\to i_*i^*(X)\lto 0.\]
From the first sequence one gets $\Ext^1_\A(P_t,X')=0$, and then the second sequence gives $\Ext^1_\A(P_t,X)=0$.

Combining the  exact sequences \eqref{eq:hwt2} and
\eqref{eq:universal} gives for each $t$ the following commutative diagram
  with exact rows and columns.
\begin{equation*}
\begin{tikzcd}
{}&0\arrow{d}&0\arrow{d}\\
{}&\De_n^r\arrow{d}\arrow[equal]{r}&\De_n^r\arrow{d}\\
0\arrow{r}&U_t\arrow{d}\arrow{r}&P_t\arrow{d}\arrow{r}&\De_t\arrow[equal]{d}\arrow{r}&0\\
0\arrow{r}&\bar U_t\arrow{d}\arrow{r}&\bar P_t\arrow{d}\arrow{r}&\De_t\arrow{r}&0\\
&0&0
\end{tikzcd}
\end{equation*}
This yields exact sequences \eqref{eq:hwt} with $U_t$ in
$\Filt(\De_{t+1},\ldots,\De_n)$, where $P_n:=\De_n$ and $U_n:=0$. We
observe that $\bigoplus_t P_t$ is a projective generator of $\A$.

It remains to show that $\A_t=\Serre(\De_1,\ldots,\De_t)$ for
$1\le t\le n$. We prove this by induction on $t$ and using the 
recollement \eqref{eq:rec_hwt}.  For $X\in\A_t$ we have
$j_!j^!(X)=\De_t^r$ for some $r\ge 0$ and
$i_*i^*(X)\in\A_{t-1}=\Serre(\De_1,\ldots,\De_{t-1})$. Thus
$X\in\Serre(\De_1,\ldots,\De_{t})$. The other inclusion is clear.
\end{proof}

\section{Quasi-hereditary rings}\label{se:quasi-hereditary}

Quasi-hereditary rings provide an alternative concept for describing a
highest weight category.  Quasi-hereditary algebras over a field were
introduced by Scott \cite{Sc1987}; the definition given here for
semiprimary rings is due to Dlab and Ringel \cite{DR1989}.

Recall that a ring $\La$ is \emph{semiprimary} if its Jacobson radical
$J(\La)$ is nilpotent and $\La/J(\La)$ is semisimple. For example, the
endomorphism ring of an object having finite composition length is
semiprimary.

\begin{defn}
  An ideal $\fra\subseteq \La$ of a semiprimary ring $\La$ is an
  \emph{heredity ideal} if $\fra$ is idempotent, $\fra$ is a
  projective $\La$-module, and $\fra J(\La) \fra=0$.
\end{defn}

Note that an ideal $\fra$ of a semiprimary ring $\La$ is idempotent
iff there exists an idempotent $e\in\La$ such that $\fra=\La e\La$;
see \cite[Statement~6]{DR1989}. In that case $\fra J(\La) \fra=0$ iff
the ring $e\La e$ is semisimple.

\begin{defn}
  A semiprimary ring $\La$ is \emph{quasi-hereditary} if there is a
  finite sequence of surjective ring homomorphisms
\begin{equation}\label{eq:ringhom}
\La=\La_n\to\La_{n-1}\to\cdots\to\La_1\to\La_0=0
\end{equation}
such that for each $1\le i\le n$ the kernel of $\La_i\to\La_{i-1}$ is
an heredity ideal. Clearly, such a sequence is equivalent
to a finite chain of ideals
\[0=\fra_n\subseteq \ldots\subseteq  \fra_{1}\subseteq\fra_0=\La\] such that
$\fra_{i-1}/\fra_i$ is an heredity ideal in $\La/\fra_i$ for all $i$. 
\end{defn}

For $k$-linear highest weight categories over a field $k$, the
following result is due to Cline, Parshall, and Scott \cite{CPS1988}.

\begin{thm}\label{th:qhered}
  Let $\A$ be an abelian length category $\A$ having only finitely
  many isoclasses of simple objects. Then $\A$ is a highest weight
  category if and only there is a quasi-hereditary ring $\La$ such
  that $\A\xto{\sim}\mod\La$.
\end{thm}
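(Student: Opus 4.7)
The plan is to use Theorem~\ref{th:hwt} as a bridge, converting a chain of homological recollements of abelian categories into a chain of surjective ring homomorphisms with heredity kernels, and conversely. I would treat the two implications separately.

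For the forward direction, assume $\A$ is a highest weight category. Proposition~\ref{pr:Gabriel} identifies $\A\simeq\mod\La$ with $\La=\End_\A(P)$ for a projective generator $P$; since $P$ has finite length, $\La$ is right artinian, hence semiprimary. Theorem~\ref{th:hwt} together with Remark~(1) supplies a chain $0=\A_0\subseteq\cdots\subseteq\A_n=\A$ of homological recollements with semisimple quotients $\mod\Ga_i$. By transitivity each $\A_i$ is a Serre subcategory of $\A$, so Lemma~\ref{le:Serre} produces a chain of idempotent ideals $0=\fra_n\subseteq\cdots\subseteq\fra_0=\La$ with $\A_i=\mod(\La/\fra_i)$. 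Writing the step ideal as $\fra_{i-1}/\fra_i=\La_i e_i\La_i\subseteq\La_i:=\La/\fra_i$, the identification $e_i\La_i e_i\cong\Ga_i$ with $\Ga_i$ semisimple gives the heredity relation $(\fra_{i-1}/\fra_i)J(\La_i)(\fra_{i-1}/\fra_i)=0$. Projectivity of $\fra_{i-1}/\fra_i$ over $\La_i$ I extract from the HWT structure exactly as in the proof of Lemma~\ref{le:reduction}: Lemma~\ref{le:filtered} exhibits this ideal as a direct sum of copies of $\De_i$, and $\De_i$ is projective in $\A_i$ since it is the image of $\Ga_i$ under the left adjoint $\mod\Ga_i\to\A_i$.

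For the converse, suppose $\La$ is quasi-hereditary with chain $0=\fra_n\subseteq\cdots\subseteq\fra_0=\La$. I would set $\La_i:=\La/\fra_i$ and $\A_i:=\mod\La_i$, viewed inside $\A:=\mod\La$ by restriction of scalars, producing a chain $0=\A_0\subseteq\A_1\subseteq\cdots\subseteq\A_n=\A$. At each step the heredity ideal $\fra_{i-1}/\fra_i=\La_i e_i\La_i$ yields a recollement of the form \eqref{eq:idempotent}, whose quotient $\mod e_i\La_i e_i$ is the module category over a semisimple ring (semisimplicity of $e_i\La_i e_i$ follows from $\fra_{i-1}J(\La_i)\fra_{i-1}=0$). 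Projectivity of $\fra_{i-1}/\fra_i$ combined with Lemma~\ref{le:idempotent} makes the recollement homological. A short check confirms that $\mod\La$ and its opposite are Ext-finite length categories with finitely many simples. Theorem~\ref{th:hwt} with Remark~(1) then concludes that $\A$ is a highest weight category.

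The main obstacle is the projectivity assertion in the forward direction. Lemma~\ref{le:idempotent} delivers only the implication \emph{projective $\Rightarrow$ homological} and not its converse, so a homological recollement alone cannot force the kernel ideal to be projective. One genuinely needs the HWT structure — specifically that $\fra_{i-1}/\fra_i$ is cut out by the projective standard object $\De_i$ — to close this gap. The remaining work is a systematic translation between Serre subcategories, idempotent ideals, and recollements via Proposition~\ref{pr:annihilated}, Lemma~\ref{le:Serre}, and Lemma~\ref{le:idempotent}.
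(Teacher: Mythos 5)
Your core mechanics coincide with the paper's: at each stage the heredity ideal is the kernel of $\La_i\to\La_{i-1}$ cut out by the top standard object, its projectivity comes from Lemma~\ref{le:filtered} (the counit $\e_{\La_i}$ is a monomorphism, so the ideal is a direct sum of copies of $\De_i$), the relation $e_iJ(\La_i)e_i=0$ comes from $e_i\La_i e_i\cong\End(\De_i)$, and in the converse direction the exact sequences \eqref{eq:hwt} are rebuilt by universal extensions exactly as in the second half of the proof of Theorem~\ref{th:hwt}. The paper carries this out by a direct induction on $n$ and never invokes Theorem~\ref{th:hwt} as a black box; that difference is not cosmetic, and it is where your argument has a genuine gap.

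Theorem~\ref{th:hwt} assumes that both $\A$ and $\A^\op$ are Ext-finite; the latter is what provides enough injectives (Proposition~\ref{pr:Gabriel}) and hence the right adjoints $i^!$ and $j_*$ that complete the colocalisation data to a full recollement (Lemma~\ref{le:Serre}(3)). Theorem~\ref{th:qhered} assumes neither, and your ``short check'' that $(\mod\La)^\op$ is Ext-finite is false in general. For instance, $\La=\smatrix{\bbQ&\bbR\\0&\bbR}$ is quasi-hereditary (it is hereditary with $\La e_2\La\cong(e_2\La)^2$ a heredity ideal and $\La/\La e_2\La\cong\bbQ$), and $\mod\La$ is a length category with two simples; but $\Ext^1_\La(S_1,S_2)\cong\bbR$ as an $(\End(S_2),\End(S_1))=(\bbR,\bbQ)$-bimodule, which is one-dimensional over $\bbR$ yet infinite-dimensional over $\bbQ$, so $(\mod\La)^\op$ is not Ext-finite and Theorem~\ref{th:hwt} does not apply as stated. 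The repair is to observe that neither direction of your argument actually needs the full recollement: in the forward direction you use only the Serre subcategories, the idempotent ideals, and Lemma~\ref{le:filtered}; in the converse direction the proof of Theorem~\ref{th:hwt}(2)$\Rightarrow$(1) uses only the colocalisation half of \eqref{eq:rec_hwt}, the Ext-finiteness of $\A$ itself (which does hold by Proposition~\ref{pr:Gabriel}), and the Ext-isomorphisms supplied by Lemma~\ref{le:idempotent} from projectivity of the heredity ideal. Stripping out the appeal to the full recollement --- which is exactly what the paper's direct induction does --- closes the gap without changing the substance of your argument.
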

\begin{proof}
  The proof is by induction on the number of simple objects in $\A$
  and yields an explicit correspondence between the standard objects
  in $\A$ and the chain of ideals in $\La$.

  Suppose that $\A$ is a highest weight category with standard objects
  $\De_1,\ldots,\De_n$. Then we have $\A=\mod\La$ for a ring $\La$ and
  there is a surjective homomorphism $f\colon\La\to\bar\La$ such that
  $\mod\bar\La=\{X\in \A\mid\Hom_\A(\De_n,X)=0\}$ is a highest weight
  category, by Lemma~\ref{le:reduction}. The induction hypothesis
  implies that $\bar\La$ is quasi-hereditary, and we need to show that
  $\fra:=\Ker f$ is an heredity ideal. Observe first that
  $\De_n\cong e\La$ for some idempotent $e\in\La$, and therefore
  $\fra=\La e\La$.  We have $e J(\La) e=0$ since
  $e\La e\cong\End_\La(\De_n)$ is a division ring. Moreover, $\fra$ is
  a direct sum of copies of $\De_n$, since the counit $\e_\La$ in
  \eqref{eq:counit} is a monomorphism by Lemma~\ref{le:filtered}. Thus
  $\fra$ is a projective $\La$-module.

  Now suppose that $\La$ is a quasi-hereditary ring with $\A=\mod\La$.
  Thus there is a sequence of surjective ring homomorphisms
  \eqref{eq:ringhom} such that the kernel of each $\La_i\to\La_{i-1}$
  is an heredity ideal. We may assume that $n$ is maximal. Set
  $\bar\La=\La_{n-1}$. Then the induction hypothesis implies that
  $\mod\bar\La$ is a highest weight category, say with standard
  objects $\De_1,\ldots,\De_{n-1}$, and we view this as full
  subcategory of $\mod\La$ via restriction along
  $f\colon\La\to\bar\La$. There is an idempotent $e\in\La$ such that
  $\Ker f=\La e\La$ and we set $\De_n=e\La$. Then
  $\End_\La(\De_n)\cong e\La e$ is semisimple since $e J(\La)e=0$. In
  fact, it is a divison ring because of the maximality of $n$.  The
  induction hypothesis yields a collection of exact sequences
  \eqref{eq:hwt2} in $\mod\bar\La$. We modify them exactly as in the
  second part of the proof of Theorem~\ref{th:hwt} to obtain exact
  sequences \eqref{eq:hwt}. For this construction one uses that $\A$
  is Ext-finite (holds by Proposition~\ref{pr:Gabriel}) and that
  $\Ext_{\bar\La}^p(-,-)\xto{\sim}\Ext_{\La}^p(-,-)$ for all $p\ge 0$
  (holds by Lemma~\ref{le:idempotent}). Thus $\mod\La$ is a highest
  weight category.
\end{proof}

We continue with a reformulation of the definition of a
quasi-hereditary ring which makes the concept accessible for
interesting constructions.  The basic idea is to extend the definition
of an heredity ideal to the context of additive categories.

Let $\C$ be an additive category and $\B\subseteq\C$ a full additive
subcategory. We denote by $\C/\B$ the additive category having the
same objects as $\C$ while the morphisms for objects $X,Y\in\C$ are
defined by the quotient \[\Hom_{\C/\B}(X,Y)=\Hom_\C(X,Y)/\B(X,Y)\]
modulo the subgroup $\B(X,Y)$ of morphisms that factor through an
object in $\B$.

The \emph{Jacobson radical} $J(\C)$ of an additive category $\C$ is by
definition the unique two-sided ideal of morphisms in $\C$ such that
$J(\C)(X,X)$ equals the Jacobson radical of the endomorphism ring
$\End_\C(X)$ for every object $X$ in $\C$.

\begin{defn}
  A full additive subcategory $\B\subseteq\C$ of an additive category
  $\C$ is called \emph{heredity subcategory} if $J(\B)=0$ and the
  inclusion admits a right adjoint $p\colon \C\to\B$ such that for
  each $X$ in $\C$ the counit $p(X)\to X$ is a monomorphism.
\end{defn}

For a semiprimary ring $\La$ there is a bijective correspondence
between idempotent ideals of $\La$ and certain additive subcategories
of $\proj \La$.  Next we show that this restricts to a correspondence
between heredity ideals and heredity subcategories.

For an object $X$ of an additive category let $\add X$ denote the full
subcategory of direct summands of finite direct sums of copies of $X$.

\begin{lem}\label{le:heredity}
  Let $\La$ be a semiprimary ring and set $\C=\proj\La$. The
  assignments
\[\La\supseteq\fra \longmapsto \{X\in\C\mid \Hom_\La(X,\La/\fra)=0\}\subseteq\C
\quad\text{and}\quad \C\supseteq\B\longmapsto
\B(\La,\La)\subseteq\La\]
give mutually inverse and incluson preserving bijections between the
sets of
\begin{enumerate}
\item idempotent ideals of $\La$, and 
\item strictly full and idempotent complete additive subcategories of
  $\proj\La$.
\end{enumerate}
These bijections restrict to a correspondence between heredity ideals
and heredity subcategories.
\end{lem}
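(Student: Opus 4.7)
The plan is to establish the general bijection first and then check that it refines to the heredity correspondence. For the general part, I would use that any idempotent ideal of a semiprimary ring has the form $\fra=\La e\La$ for an idempotent $e\in\La$. Given such $\fra$, I would verify $\{X\in\proj\La\mid\Hom_\La(X,\La/\fra)=0\}=\add(e\La)$: the inclusion ``$\supseteq$'' follows from $\Hom_\La(e\La,\La/\fra)=(\La/\fra)e=0$ (using $e\in\fra$), while ``$\subseteq$'' amounts to showing that any primitive idempotent $f$ with $f\La$ in the subcategory satisfies $f\in\La e\La$, and hence $f\La\in\add(e\La)$, which I would establish by writing $f=\sum a_ieb_i$ and constructing an explicit section of $(e\La)^n\to f\La$ via $fx\mapsto(eb_ifx)_i$ and $(y_i)_i\mapsto f\sum a_iy_i$. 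Conversely, for $\B=\add(e\La)$ the ideal $\B(\La,\La)$ consists of composites $\La\to(e\La)^n\to\La$, which by $\Hom_\La(e\La,\La)=\La e$ equals $\La e\La$. These computations make the two assignments mutually inverse and inclusion preserving.

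For the heredity refinement, suppose first that $\fra=\La e\La$ is a heredity ideal. The conditions $\fra J(\La)\fra=0$ together with $\La$ semiprimary force $e\La e$ semisimple, so $J(\End_\La(e\La))=0$ and consequently $J(\add(e\La))=0$. The candidate right adjoint is $p(P):=P\fra\subseteq P$. Because $\fra$ is projective (hence flat), tensoring $0\to\fra\to\La\to\La/\fra\to 0$ with $P$ identifies $P\fra$ with $P\otimes_\La\fra$, which lies in $\add(\fra)\subseteq\add(e\La)$ (the canonical surjection $(e\La)^n\twoheadrightarrow\fra$ splits by projectivity of $\fra$). The inclusion $P\fra\hookrightarrow P$ is automatically a monomorphism, and its universal property reduces to the identity $B\cdot\fra=B$ for $B\in\add(e\La)$, which in turn follows from $e\La=e\La e\La$ (immediate from $e\in\La e\La$).

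Conversely, assume $\B$ is a heredity subcategory, write $\B=\add(e\La)$ and $\fra=\B(\La,\La)=\La e\La$. Applying $p$ to $\La$ produces $p(\La)\hookrightarrow\La$ with $p(\La)\in\B$. The universal property forces $p(\La)$ to contain the trace of $\B$ in $\La$, which by $\Hom_\La(e\La,\La)=\La e$ is exactly $\La e\La=\fra$. Conversely, any map $p(\La)\to\La$ from an object of $\add(e\La)$ lands inside that same trace, so $p(\La)\subseteq\fra$. Hence $\fra=p(\La)\in\add(e\La)$, exhibiting $\fra$ as a summand of some $(e\La)^n$ and therefore projective. Finally $J(\B)=0$ yields $J(e\La e)=J(\End_\La(e\La))=0$, and since $e\La e$ is a corner of a semiprimary ring it is semiprimary, hence semisimple; by the equivalence recalled before the definition of heredity ideal, this is $\fra J(\La)\fra=0$.

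The main obstacle is the projectivity–monomorphism content in the heredity refinement: producing a right adjoint with monomorphic counit landing in $\B$ precisely encodes projectivity of $\fra$, and conversely extracting projectivity of $\fra$ from the abstract existence of such an adjoint. The remaining pieces—the translation $J(\B)=0\Leftrightarrow\fra J(\La)\fra=0$ via the corner ring $e\La e$, and the general idempotent-ideal bijection—are essentially formal once the correct candidate $p(P)=P\fra$ is identified.
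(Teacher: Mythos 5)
Your argument is correct and follows the same overall architecture as the paper's proof: write every idempotent ideal as $\fra=\La e\La$, match it with $\add e\La$, translate $\fra J(\La)\fra=0$ into $J(\B)=0$ through the corner ring $e\La e\cong\End_\La(e\La)$, and identify projectivity of $\fra$ with the existence of a right adjoint whose counit is a monomorphism. The one genuinely different ingredient is the construction of that adjoint. The paper uses $X\mapsto \Hom_\La(e\La,X)\otimes_{e\La e}e\La$ coming from the recollement \eqref{eq:idempotent}; this is a right adjoint of $\B\to\C$ irrespective of whether $\fra$ is projective, so the whole heredity refinement reduces to the single equivalence ``$\fra$ is projective iff the counit $\e_\La$ of \eqref{eq:counit-idempotent} is a monomorphism'' (its image being $\fra$ in either case). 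You instead take $p(P)=P\fra$, the trace of $\B$ in $P$; this needs projectivity of $\fra$ up front to see that $p(P)$ lands in $\B$, so it only serves the forward implication, and you correctly switch to an abstract right adjoint for the converse. Both routes work, the paper's being slightly more economical since one formula drives both directions, while yours makes the identity $B\cdot\fra=B$ for $B\in\add e\La$ do the adjunction work explicitly. Your verification of the underlying bijection via explicit sections of $(e\La)^n\to f\La$ is more elementary than the paper's appeal to the recollement, and your derivation of $\fra J(\La)\fra=0\Leftrightarrow e\La e$ semisimple re-proves a fact the paper simply quotes from Dlab--Ringel. One caveat, which your proof shares with the paper's and which you may as well make explicit: both arguments use that the projective module $\fra=\La e\La$ is \emph{finitely generated} (equivalently, that $\La e$ is finitely generated over $e\La e$), since $\B$ lives inside $\proj\La$; for a general semiprimary ring this is an extra finiteness condition, automatic in the artinian settings where the lemma is applied.
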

\begin{proof}
  For an idempotent ideal $\fra=\La e\La$, an analysis of the
  recollement \eqref{eq:idempotent} shows that
  $\add e\La=\{X\in\C\mid \Hom_\La(X,\La/\fra)=0\}$. Conversely, any
  strictly full and idempotent complete additive subcategory  $\B\subseteq\C$
  is of the form $\B=\add e\La$ for some idempotent $e\in\La$, because
  the ring $\La$ is semiperfect. Then $\B(\La,\La)=\La e\La$.

  Now fix an ideal $\fra=\La e\La$ and a subcategory $\B=\add e\La$
  that correspond to each other.  Then $\fra J(\La)\fra=0$ if and only
  if $J(\B)=0$. Assume this property, which means that $e\La e$ is
  semisimple. The assignment
  $X\mapsto \Hom_\La(e\La,X)\otimes_{e\La e}e\La$ provides a right
  adjoint for the inclusion $\B\to\C$.  We claim that $\fra$ is a
  projective $\La$-module if and only if the counit $\e_X$ in
  \eqref{eq:counit-idempotent} is a monomorphism for all $X$ in
  $\C$. For this it suffices to consider $\e_\La$, using that its
  image equals $\fra$. If $\fra$ is projective, then $\e_\La$ is a
  monomorphism since $\Hom_\La(e\La,\Ker \e_\La)=0$. Conversely, if
  $\e_\La$ is a monomorphism, then $\fra$ belongs $\B$ and is
  therefore projective.  We conclude that $\fra$ is an heredity ideal
  if and only if $\B$ is an heredity subcategory.
\end{proof}

\begin{prop}\label{pr:qhered}
A semiprimary ring is quasi-hereditary if and only if there is a  finite chain of full additive 
subcategories
\begin{equation}\label{eq:her-chain}
0=\C_n\subseteq\ldots\subseteq \C_{1}\subseteq\C_0=\proj\La
\end{equation}
such that $\C_{i-1}/\C_i$ is an heredity subcategory of $\C/\C_i$ for
$1\le i\le n$.
\end{prop}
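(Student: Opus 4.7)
The plan is to translate, via Lemma~\ref{le:heredity}, a chain of ideals in $\La$ into a chain of additive subcategories of $\proj\La$. Since the quasi-hereditary condition refers to heredity ideals in each intermediate quotient $\La/\fra_i$, the central point is to identify $\proj(\La/\fra_i)$ with a categorical quotient of $\proj\La$, so that Lemma~\ref{le:heredity} applied to $\La/\fra_i$ can be combined with the chain of subcategories in $\proj\La$.

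The key step I would establish first is: for any idempotent ideal $\fra=\La e\La$ of $\La$ and $\C=\add(e\La)$, the functor $-\otimes_\La(\La/\fra)$ induces an equivalence of additive categories
\[\proj\La/\C\xrightarrow{\ \sim\ }\proj(\La/\fra).\]
Since $\La$ and $\La/\fra$ are semiperfect, every finitely generated projective $\La/\fra$-module is isomorphic to $e'\La\otimes_\La(\La/\fra)$ for some idempotent $e'\in\La$, giving essential surjectivity; fullness is clear, as any element of $e'(\La/\fra)e''$ is the image of some element of $e'\La e''$. The content is faithfulness: a morphism $f\colon e''\La\to e'\La$ corresponds to an element $a\in e'\La e''$, and the condition $\bar a=0$ in $e'(\La/\fra)e''$ is exactly $a\in e'\fra e''=e'\La e\La e''$. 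But $a\in e'\La e\La e''$ is precisely the condition $a=\sum b_ic_i$ with $b_i\in e'\La e$ and $c_i\in e\La e''$, which is exactly the condition that $f$ factor through a finite direct sum of copies of $e\La$, i.e.\ through an object of $\C$.

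Granting this equivalence, both directions of the proposition are a bookkeeping exercise. From a quasi-hereditary chain $0=\fra_n\subseteq\cdots\subseteq\fra_0=\La$ with $\fra_i=\La e_i\La$, set $\C_i=\add(e_i\La)$ to obtain a chain $0=\C_n\subseteq\cdots\subseteq\C_0=\proj\La$; these are strictly full, idempotent complete additive subcategories corresponding to $\fra_i$ under Lemma~\ref{le:heredity}. Under the equivalence $\proj\La/\C_i\simeq\proj(\La/\fra_i)$, the subcategory $\C_{i-1}/\C_i$ is identified with the subcategory of $\proj(\La/\fra_i)$ attached by Lemma~\ref{le:heredity} (applied to $\La/\fra_i$) to the ideal $\fra_{i-1}/\fra_i$; therefore $\fra_{i-1}/\fra_i$ is a heredity ideal if and only if $\C_{i-1}/\C_i$ is a heredity subcategory of $\proj\La/\C_i$. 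The converse direction follows from the same dictionary: a chain of heredity subcategories recovers idempotents $e_i$, hence ideals $\fra_i=\La e_i\La$, and the heredity property is transported back in the same way.

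The main obstacle I expect is the equivalence $\proj\La/\C\simeq\proj(\La/\fra)$, and specifically its faithfulness, which is the explicit matching of "factors through $\C$" with "vanishes modulo $\fra$". Once this is in place, the rest of the proof is a straightforward transcription using Lemma~\ref{le:heredity} applied level by level.
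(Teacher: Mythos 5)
Your proposal is correct and follows the same route as the paper, whose proof is simply ``Apply Lemma~\ref{le:heredity}'': you translate the chain of idempotent ideals into a chain of subcategories of $\proj\La$ and apply the lemma level by level. The only thing you add is an explicit verification of the equivalence $\proj\La/\add(e\La)\simeq\proj(\La/\La e\La)$ (in particular its faithfulness, matching ``factors through $\add(e\La)$'' with ``lies in $e'\La e\La e''$''), which the paper leaves implicit; this is exactly the right detail to supply.
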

\begin{proof}
Apply  Lemma~\ref{le:heredity}.
\end{proof}

\begin{rem}\label{re:qhered}
  For $\La$ to be quasi-hereditary it suffices to have a chain of full
  additive subcategories \eqref{eq:her-chain} satisfying for all $i$
  the following:
\begin{enumerate}
\item $J(\C_i)(X,Y)\subseteq\C_{i+1}(X,Y)$ for all $X,Y\in\C_i$.
\item The inclusion $\C_{i+1}\to\C_i$ admits a right adjoint $p_i$
  such that the counit $p_i(X)\to X$ is a monomorphism for all $X\in\C_i$.
\end{enumerate}
\end{rem}

The following result provides a natural construction of
quasi-hereditary rings which is due to Iyama \cite{Iy2003}.

\begin{cor}
  Le $\A$ be an abelian category and suppose that every object in $\A$
  has a semiprimary endomorphism ring. Fix an object $X=X_0$ and set
  $X_{t+1}=\mathfrak r X_t$ for $t\ge 0$, where
  $\mathfrak r Y=\sum_{\p\in J(\End_\A(Y))}\Im\p$ for any object $Y$ in
  $\A$. Then the endomorphism ring of $\bigoplus_{t\ge 0}X_t$ is
  quasi-hereditary.
\end{cor}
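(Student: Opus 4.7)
The plan is to exhibit $\Lambda := \End_\A\bigl(\bigoplus_t X_t\bigr)$ as quasi-hereditary by building a chain of subcategories of $\proj\Lambda$ from the filtration $X_0 \supseteq X_1 \supseteq \cdots$ and invoking Proposition~\ref{pr:qhered} via its sufficient formulation in Remark~\ref{re:qhered}.

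First I would argue that the sequence $(X_t)_{t\ge 0}$ terminates. The key point is that $X_t \ne 0$ forces $\mathfrak r X_t \subsetneq X_t$: if equality held, then writing $X_t = \sum_{i=1}^m \Im\phi_i$ with $\phi_i \in J(\End_\A(X_t))$ and iterating $\phi_i(X_t) = \sum_j \Im(\phi_i\phi_j)$ would give $X_t = \sum \Im(\phi_{i_1}\cdots\phi_{i_N})$, which vanishes since $J(\End_\A(X_t))^N = 0$ by the semiprimary hypothesis. Combined with a finiteness invariant of the ambient category, this yields termination at some step $n$. Set $Y := \bigoplus_{t=0}^{n-1} X_t$, $\Lambda := \End_\A(Y)^{\op}$, and identify $\add Y \xto{\sim} \proj\Lambda$ via $\Hom_\A(Y,-)$.

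Next, define $\C_i := \add\bigl(\bigoplus_{t \ge i} X_t\bigr)$ for $0 \le i \le n$, which yields a chain $0 = \C_n \subseteq \C_{n-1} \subseteq \cdots \subseteq \C_0 = \add Y$. By Remark~\ref{re:qhered} it suffices to verify, for each $0 \le i < n$: (i) $J(\C_i)(Z,W) \subseteq \C_{i+1}(Z,W)$ for all $Z, W \in \C_i$; (ii) the inclusion $\C_{i+1}\hookrightarrow\C_i$ has a right adjoint $p_i$ whose counit is a monomorphism at every object. Condition (i) follows directly from the definition of $\mathfrak r$: after a matrix decomposition of a morphism in $J(\C_i)$, the only component not automatically factoring through $\C_{i+1}$ is an element $\p \in J(\End_\A(X_i))$, and $\Im\p \subseteq \mathfrak r X_i = X_{i+1}$ exhibits the required factorisation through the inclusion $X_{i+1}\hookrightarrow X_i$.

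Condition (ii) is the essential technical step and the one I expect to be the main obstacle. I would take $p_i$ to send $X_t \mapsto X_t$ for $t > i$ (counit the identity) and $X_i \mapsto X_{i+1}$ (counit the inclusion $X_{i+1} \hookrightarrow X_i$), extending additively, so that the counits are manifestly monomorphisms. By additivity the universal property reduces to the claim that every $\p : X_t \to X_i$ with $t \ge i+1$ satisfies $\Im\p \subseteq X_{i+1}$. The key preparatory observation is that $X_{i+1}$ is stable under $\End_\A(X_i)$: for $\phi \in \End_\A(X_i)$ and $\psi \in J(\End_\A(X_i))$ one has $\phi(\Im\psi) = \Im(\phi\psi) \subseteq X_{i+1}$ because $J(\End_\A(X_i))$ is a two-sided ideal, so $\End_\A(X_i)$ acts on the quotient $X_i/X_{i+1}$ and this action factors through the semisimple ring $\End_\A(X_i)/J(\End_\A(X_i))$. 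I would then exploit this together with the nested structure $X_t \subseteq X_{i+1} \subseteq X_i$ to conclude that the map $\Hom_\A(X_t, X_i) \to \Hom_\A(X_t, X_i/X_{i+1})$ is zero, which is exactly the desired claim. Once (i) and (ii) are in place, Remark~\ref{re:qhered} and Proposition~\ref{pr:qhered} yield that $\Lambda$ is quasi-hereditary.
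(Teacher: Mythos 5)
Your overall strategy coincides with the paper's: the same chain $\C_i=\add\bigl(\bigoplus_{t\ge i}X_t\bigr)$, the same reduction to Proposition~\ref{pr:qhered} via Remark~\ref{re:qhered}, and essentially the same verification of the radical condition (i). The two points you flag as remaining work are, however, genuine gaps, and the second one rests on a claim that is false as you state it.

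Concerning condition (ii): you take $p_i(X_i)=X_{i+1}$ with counit the inclusion and reduce the universal property to the claim that every morphism $X_t\to X_i$ with $t>i$ has image in $X_{i+1}$, equivalently that $\Hom_\A(X_t,X_i)\to\Hom_\A(X_t,X_i/X_{i+1})$ vanishes. This is false in general, so no argument via the semisimple action of $\End_\A(X_i)/J$ on $X_i/X_{i+1}$ can close it. Take $\A=\mod\La$ for $\La$ the path algebra of the quiver $1\to 2$, let $P$ be the indecomposable projective of length two with socle $S$, and set $X=P\oplus S$. Then $J(\End_\A(X))$ is spanned by the endomorphism killing $P$ and mapping $S$ isomorphically onto $\soc P$ (note $\Hom_\A(P,S)=0$), so $X_1=\mathfrak r X=\soc P\cong S$ and $X_2=0$; but the composite $X_1\xto{\sim}S\hookrightarrow X$ is a morphism $X_1\to X_0$ whose image is the summand $S$, not contained in $X_1$. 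Hence $\Hom_\A(X_1,X_1)\cong k$ while $\Hom_\A(X_1,X_0)\cong k^2$, and your additively extended $p_0$ is not a right adjoint. (The corollary of course still holds here: $\End_\A(X_0\oplus X_1)$ is Morita equivalent to the hereditary triangular matrix algebra.) The paper's own sketch uses the same naive formula and the same bijection, but it explicitly defers the proof to Iyama \cite{Iy2003}; the adjoint that actually works sends $Y\in\C_i$ to the trace of $\C_{i+1}$ in $Y$ (in the example $p_0(X_0)=\soc P\oplus S\cong S^2$), whose counit is a monomorphism for free, and the substantive step — nowhere addressed in your proposal — is to show that this trace again belongs to $\C_{i+1}$, which uses the semiprimary hypothesis (finiteness of the relevant Hom-modules over the corner rings) and not merely the containment $X_t\subseteq X_{i+1}$.

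Concerning termination: your appeal to ``a finiteness invariant of the ambient category'' is not available. The hypotheses give only that endomorphism rings are semiprimary; there is no length or chain condition on subobjects, so the strict descent $X_{t+1}\subsetneq X_t$ you establish does not by itself terminate — yet termination is needed both for $\bigoplus_{t\ge 0}X_t$ to be an object of $\A$ and for the chain \eqref{eq:her-chain} to be finite. The paper obtains $\C_i=0$ for $i\gg 0$ from the nilpotency of $J(\End_\A(X))$ for the fixed object $X$ (the number of steps is bounded by the nilpotency index), which is an argument about $X$ itself rather than about any ambient finiteness; this is the mechanism your truncation at $n$ is missing. A minor further point: with right modules and the functor $\Hom_\A(Y,-)$ one identifies $\add Y\simeq\proj\End_\A(Y)$ directly, so the passage to the opposite ring in your setup is unnecessary (and, taken literally, would additionally invoke the nontrivial fact that quasi-heredity is preserved under forming opposite rings).
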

\begin{proof}
  In \cite{Iy2003} the result is stated for modules over artin
  algebras. The same proof works in our more general setting.
  
  We apply Proposition~\ref{pr:qhered} and check the conditions of the
  subsequent remark.  Set $\C_i=\add(\bigoplus_{t\ge i}X_t)$ for $i\ge 0$
  and $\La=\End_\A(\bigoplus_{t\ge 0}X_t)$. Thus we can identify
  $\proj\La=\C_0$.  Note that $\C_i=0$ for $i\gg 0$ since
  $J(\End_\A(X))$ is nilpotent. The inclusion $\C_{i+1}\to\C_i$ admits
  a right adjoint $p_i$ given by $p_i(X_t)=X_t$ for $t>i$ and
  $p_i(X_i)=X_{i+1}$.  The counit $p_i(Y)\to Y$ is a monomorphism for
  all $Y$; it is for $Y=X_t$ the identity when $t>i$ and the inclusion
  $\mathfrak r X_i\to X_i$ when $t=i$. This follows from the fact that
  $\mathfrak r X_i\to X_i$ induces a bijection
\[\Hom_\C(X_t,\mathfrak r X_i)\xto{\sim}\Hom_\C(X_t,X_i)\]
for all $t>i$. It remains to observe that
  $J(\C_i)(X,Y)\subseteq\C_{i+1}(X,Y)$ for all $X,Y\in\C_i$ by
  construction.  
\end{proof}

\section{Exceptional sequences}\label{se:exceptional}

In Theorem~\ref{th:standard-defn} we have seen a description of
highest weight categories via standard objects that suggests a close
connection with the concept of an exceptional sequence, as introduced
in the study of vector bundles \cite{Bo1990,Go1988,GR1987,Ru1990}. We
make this connection precise, and this involves the use of derived
categories.

For an exact  category $\A$ let $\bfD^b(\A)$ denote its bounded
derived category \cite{Ke1996}.

\begin{defn}
  Let $\A$ be an abelian category. An object $E$ in $\A$ is
  \emph{exceptional} if $\End_\A(E)$ is a division ring and
  $\Ext^p_\A(E,E)=0$ for all $p>0$. A sequence of objects
  $(E_1,\ldots,E_n)$ in $\A$ is called \emph{exceptional} if each
  $E_i$ is exceptional and $\Ext^p_\A(E_i,E_j)=0$ for all $i>j$ and
  $p\ge 0$.  The sequence is \emph{full} if the objects
  $E_1,\ldots,E_n$ generate $\bfD^b(\A)$ as a triangulated category,
  and we say that the sequence is \emph{strictly full} if
  the inclusion $\Filt(E_1,\ldots,E_n)\to\A$ induces a triangle
  equivalence
\[\bfD^b(\Filt(E_1,\ldots,E_n))\xto{\ \sim\ }\bfD^b(\A).\]
\end{defn}

Note that a full exceptional sequence need not be strictly full; see
Examples~\ref{ex:kalck} and \ref{ex:broomhead}.

\begin{thm}\label{th:exceptional}
  Let $k$ be a commutative artinian ring and $\A$ a $k$-linear abelian
  category such that $\Hom_\A(X,Y)$ and $\Ext^1_\A(X,Y)$ are finitely
  generated over $k$ for all $X,Y$ in $\A$. For a sequence
  $(E_1,\ldots,E_n)$ of objects in $\A$ the following are equivalent:
\begin{enumerate}
\item The sequence $(E_1,\ldots,E_n)$ is a strictly full exceptional
  sequence.
\item There is a highest weight category $\A'$ and a triangle
  equivalence $\bfD^b(\A)\xto{\sim} \bfD^b(\A')$ that maps
  $(E_1,\ldots,E_n)$ to the sequence of standard objects in $\A'$.
\end{enumerate}
\end{thm}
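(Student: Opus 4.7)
For $(2) \Rightarrow (1)$: Given a highest weight category $\A'$ with standard objects $(\De_1, \ldots, \De_n)$, conditions (1) and (2) of Definition~\ref{de:hwt} immediately yield the degree-zero exceptional properties. For higher Ext vanishing, namely $\Ext^p_{\A'}(\De_i, \De_j) = 0$ when $i > j$ with $p \geq 1$, and $\Ext^p_{\A'}(\De_i, \De_i) = 0$ for $p \geq 1$, I would induct on $n$: when $i = n$ the object $\De_n$ is projective, and when $i < n$ both $\De_i, \De_j$ lie in $\bar{\A}'$ (using Definition~\ref{de:hwt}(2)), so Lemma~\ref{le:reduction}(2) identifies $\Ext^p_{\A'}(\De_i, \De_j)$ with $\Ext^p_{\bar{\A}'}(\De_i, \De_j)$ and the inductive hypothesis applies. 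For the strictly full condition, each exact sequence \eqref{eq:hwt} exhibits $P_i$ as an extension of $\De_i$ by $U_i \in \Filt(\De_{i+1}, \ldots, \De_n)$, so $P_i \in \Filt(\De_1, \ldots, \De_n)$; hence the projective generator $\bigoplus P_i$ lies in $\Filt(\De_1, \ldots, \De_n)$. Since quasi-hereditary rings have finite global dimension, every object of $\A'$ admits a finite projective resolution by objects of $\Filt(\De_1, \ldots, \De_n)$, yielding a triangle equivalence $\bfD^b(\Filt(\De_1, \ldots, \De_n)) \xto{\sim} \bfD^b(\A')$.

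For $(1) \Rightarrow (2)$: Given a strictly full exceptional sequence $(E_1, \ldots, E_n)$, I would construct a tilting object $T = \bigoplus_{i=1}^n T_i \in \bfD^b(\A)$ whose endomorphism ring is quasi-hereditary. Set $T_n := E_n$; for $i < n$, define $T_i$ inductively by iterated mutation of $E_i$ through the thick subcategory generated by $\{E_{i+1}, \ldots, E_n\}$, so that $T_i$ fits into a triangle $U_i \to T_i \to E_i \to U_i[1]$ with $U_i$ in that thick subcategory and $\RHom_{\bfD^b(\A)}(T_i, E_j) = 0$ for all $j > i$. The Ext-finiteness of $\A$ over the artinian ring $k$ guarantees these mutations exist, one positive degree at a time. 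The exceptional conditions, combined with long exact sequence arguments, then yield $\Ext^p_{\bfD^b(\A)}(T_i, T_j) = 0$ for all $p \geq 1$ and all $i, j$, so $T$ is tilting. Strict fullness implies that $\{E_1, \ldots, E_n\}$ and hence $\{T_1, \ldots, T_n\}$ generate $\bfD^b(\A)$ as a thick subcategory, so setting $\La := \End_{\bfD^b(\A)}(T)^{\op}$ and $\A' := \mod \La$, standard tilting theory provides a triangle equivalence $\RHom_\A(T, -) \colon \bfD^b(\A) \xto{\sim} \bfD^b(\A')$.

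Finally, I would verify that $\A'$ is a highest weight category with standard objects $\De_i := \Hom_{\bfD^b(\A)}(T, E_i)$. Applying $\Hom_{\bfD^b(\A)}(T, -)$ to the defining triangle of $T_i$ yields exact sequences $0 \to \bar{U}_i \to P_i \to \De_i \to 0$ in $\A'$ with $P_i$ projective and $\bar{U}_i \in \Filt(\De_{i+1}, \ldots, \De_n)$, realising Definition~\ref{de:hwt}(3); conditions (1) and (2) transfer directly from the exceptional properties of the $E_i$, and (4) follows from the fact that the $P_i$ are the indecomposable summands of the tilting object $T$. The main obstacle is the construction of the $T_i$ together with the full Ext-vanishing $\Ext^p_{\bfD^b(\A)}(T, T) = 0$ for all $p \geq 1$; naive universal extensions in the abelian category $\A$ only kill $\Ext^1$, so the mutations must be performed in the derived category and the higher Ext-groups eliminated inductively, exploiting the semiorthogonal decomposition induced by the exceptional sequence.
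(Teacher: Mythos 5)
Your proof of (1)~$\Rightarrow$~(2) has a genuine gap, and it sits exactly at the point the theorem is about. Your construction of the objects $T_i$ by mutation in $\bfD^b(\A)$ uses the hypothesis of strict fullness only to conclude that the $T_i$ generate $\bfD^b(\A)$ --- but for that, mere fullness suffices. If the rest of your argument went through, every \emph{full} exceptional sequence would satisfy (2), and then by the converse direction it would be strictly full; this contradicts Examples~\ref{ex:kalck} and \ref{ex:broomhead}. The unjustified steps are precisely the ones you flag as "long exact sequence arguments": the vanishing $\Ext^p(T_i,T_j)=0$ for $p\ge 1$ in the case $i>j$ reduces to $\Hom(T_i,U_j[p])$, and since $\Hom(T_i,E_i)\neq 0$, this vanishes only if $U_j$ is an honest iterated extension of $E_{j+1},\ldots,E_n$ \emph{in the abelian category}, not merely an object of the thick subcategory they generate; similarly, your final step needs $\Hom(T,E_i[p])=0$ and $\Hom(T,U_i[p])=0$ for $p\neq 0$ so that the triangles become short exact sequences of modules with $\bar U_i\in\Filt(\De_{i+1},\ldots,\De_n)$ --- none of this is established, and it fails for full-but-not-strictly-full sequences. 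There is also a hypothesis mismatch: the theorem only assumes finiteness of $\Hom$ and $\Ext^1$ over $k$, whereas forming your $T_i$ as finite iterated cones requires finiteness (and effectively boundedness) of all higher $\Ext$-groups between the $E_i$. The paper avoids all of this: Lemma~\ref{le:standard} performs universal extensions purely inside $\A$ (only $\Ext^1$-data needed) to produce a projective generator $P$ of the exact category $\Filt(E_1,\ldots,E_n)$ --- projectivity there only requires $\Ext^1$-vanishing against filtered objects --- so $\La=\End_\A(P)$ gives the highest weight category $\mod\La$, and the derived equivalence is not obtained from a tilting object in $\A$ but from the strictly-full hypothesis combined with Lemma~\ref{le:filt}. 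Your closing worry that universal extensions "only kill $\Ext^1$" is therefore misplaced: the higher Ext-vanishing is automatic in the new category $\mod\La$, and strict fullness is what transports it back.

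In (2)~$\Rightarrow$~(1) you prove, essentially as in Proposition~\ref{pr:hwt-except}, that the standard objects $\De_i$ form a strictly full exceptional sequence in $\A'$, but statement (1) concerns $(E_1,\ldots,E_n)$ in $\A$. Exceptionality transfers immediately across the triangle equivalence $F$, but strict fullness does not: $\Filt(E_1,\ldots,E_n)$ is built from short exact sequences in $\A$, and $F$ is only a derived equivalence, not an exact functor of abelian categories. The paper closes this by an induction on $n$ showing that $F$ restricts to an equivalence $\Filt(E_1,\ldots,E_n)\xto{\sim}\Filt(\De_1,\ldots,\De_n)$, then compares the induced square of derived categories and uses Lemma~\ref{le:filt} on the right-hand side to deduce that $\bfD^b(\Filt(E_1,\ldots,E_n))\to\bfD^b(\A)$ is an equivalence. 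That step is missing from your argument.
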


A special instance of this theorem for vecor bundles on rational
surfaces is due to Hille and Perling \cite{HP2014}. For further
examples of this connection, relating derived categories of
Grassmannians and modular representation theory, see \cite{BLV,
  Ef2014}.

I am grateful to Lutz Hille for pointing out the following criterion
for an exceptional sequence to be strictly full; it is an immediate
consequence of the proof of Theorem~\ref{th:exceptional}.

\begin{rem}[Hille] 
  An exceptional sequence $(E_1,\ldots,E_n)$ in $\A$ is strictly full
  if any tilting object in $\Filt(E_1,\ldots,E_n)$ is also
  a tilting object in $\A$.
\end{rem}

Recall that an object $T$ of an exact category $\A$ is a \emph{tilting
  object} if $\Ext^p_\A(T,T)=0$ for all $p>0$ and $T$ generates
$\bfD^b(\A)$ as a triangulated category.

We need some preparations for the proof Theorem~\ref{th:exceptional}
and we begin with the following well known fact
\cite[III.2.4]{Ve1997}.

\begin{lem}\label{le:perf}
  Let $\A$ be an abelian (or exact) category with projective generator
  $P$ and set $\La=\End_\A(P)$.  The inclusion $\proj\La\to\A$
  induces a triangle equivalence $\bfD^b(\proj\La)\xto{\sim}\bfD^b(\A)$
  if all objects of $\A$ have finite projective dimension.\qed
\end{lem}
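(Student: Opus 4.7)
The plan is as follows. First I would identify $\proj\La$ with $\add P\subseteq\A$ via the equivalence $\Hom_\A(P,-)\colon\add P\xto{\sim}\proj\La$ that arises from $P$ being a projective generator with $\La=\End_\A(P)$. Under this identification the inclusion $\proj\La\to\A$ is just the inclusion of a full additive subcategory consisting of projective objects of $\A$. Passing to bounded complexes and localising yields a triangle functor $F\colon\bfD^b(\proj\La)\to\bfD^b(\A)$, and note that $\bfD^b(\proj\La)$ coincides with the homotopy category $K^b(\proj\La)$ since only the split short exact sequences are conflations in $\proj\La$.

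For full faithfulness I would show that for bounded complexes $P^\bullet,Q^\bullet$ with terms in $\proj\La$,
\[
\Hom_{\bfD^b(\A)}(P^\bullet,Q^\bullet)=\Hom_{K^b(\A)}(P^\bullet,Q^\bullet)=\Hom_{K^b(\proj\La)}(P^\bullet,Q^\bullet).
\]
The second equality is tautological. The first relies on the standard fact that any quasi-isomorphism $R^\bullet\to P^\bullet$ from a bounded complex into a bounded complex of projectives admits a homotopy section; this is proved by induction on the length of $P^\bullet$, using that each $P^i$ is projective in $\A$. Consequently, every morphism in $\bfD^b(\A)$, represented by a roof $P^\bullet\leftarrow S^\bullet\to Q^\bullet$, is realised by a genuine chain map, and two chain maps agreeing in $\bfD^b(\A)$ are chain-homotopic.

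For essential surjectivity I would invoke the finite projective dimension hypothesis. Since $P$ is a projective generator, every projective object of $\A$ lies in $\add P=\proj\La$, so each $X\in\A$ admits a bounded projective resolution with terms in $\proj\La$; hence $X$, viewed as a complex concentrated in degree zero, is isomorphic in $\bfD^b(\A)$ to an object in the essential image of $F$. For an arbitrary $X^\bullet\in\bfD^b(\A)$ with nonzero components in degrees $[a,b]$, I would induct on $b-a$ using the brutal truncation triangle $\sigma^{>a}X^\bullet\to X^\bullet\to X^a[-a]\to$: both outer terms lie in the essential image (by induction and the case just treated), so $X^\bullet$ does too, since the essential image is a triangulated subcategory of $\bfD^b(\A)$.

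The main obstacle is the full faithfulness step, because the morphism sets in $\bfD^b(\A)$ are defined by a calculus of fractions and one must pass back to honest chain maps. However, this reduces entirely to the routine lifting property for bounded complexes of projective objects, so the argument is standard and no serious technical difficulty arises.
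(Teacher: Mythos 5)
Your proof is correct, and it is exactly the standard argument that the paper does not spell out: the lemma is stated with a reference to Verdier \cite[III.2.4]{Ve1997} and no proof. Identifying $\proj\La$ with $\add P$, reducing $\bfD^b(\proj\La)$ to $K^b(\proj\La)$, deducing full faithfulness from the homotopy lifting property of bounded complexes of projectives, and getting essential surjectivity from finite resolutions plus induction on brutal truncations is precisely the intended route.
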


\begin{lem}\label{le:filt}
Let $\A$ be a highest weight category with standard objects
$\De_1,\ldots,\De_n$. Then the inclusion
$\Filt(\De_1,\ldots,\De_n)\to\A$ induces a triangle equivalence
\[\bfD^b(\Filt(\De_1,\ldots,\De_n))\xto{\ \sim\ }\bfD^b(\A).\]
\end{lem}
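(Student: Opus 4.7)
The plan is to apply Lemma~\ref{le:perf} to both $\A$ and $\E := \Filt(\De_1,\ldots,\De_n)$ using a common projective generator, thereby identifying both derived categories with $\bfD^b(\proj\La)$ where $\La = \End_\A(P)$ for $P = \bigoplus_{i=1}^n P_i$. The defining exact sequences $0 \to U_i \to P_i \to \De_i \to 0$ with $U_i \in \Filt(\De_{i+1},\ldots,\De_n)$ show that each $P_i$ is filtered by standard objects, hence $P \in \E$; since $\E$ is closed under extensions in $\A$ it inherits an exact structure, $P$ remains projective in $\E$, and $\End_\E(P) = \End_\A(P) = \La$ because $\E \hookrightarrow \A$ is fully faithful.

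To check that $P$ is a projective generator of $\E$, I induct on the filtration length of $X \in \E$. For $X = \De_i$ the defining sequence already provides an admissible projective presentation (the kernel $U_i$ lies in $\E$); for a general $X$, writing $0 \to X' \to X \to \De_j \to 0$ with $X' \in \E$ of shorter length and applying the horseshoe lemma to inductively constructed presentations of $X'$ and $\De_j$ yields one for $X$. A parallel downward induction (starting from $\De_n = P_n$, which is projective since $U_n = 0$) shows each $\De_i$ has finite projective dimension in $\E$, and taking extensions propagates this to all of $\E$.

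The less immediate point is that every object of $\A$ has finite projective dimension; this I prove by induction on $n$ via Lemma~\ref{le:reduction}. That lemma identifies $\bar\A = \{X \in \A \mid \Hom_\A(\De_n,X) = 0\}$ as a highest weight category with standard objects $\De_1,\ldots,\De_{n-1}$ and gives $\Ext^p_{\bar\A} \xto{\sim} \Ext^p_\A$, so by induction every object of $\bar\A$ has finite pd inside $\A$. For a general $X \in \A$, the four-term exact sequence \eqref{eq:co-loc} expresses $X$ in terms of the middle term $j_!j^!(X)$, which is a direct sum of copies of $\De_n$ (because $\Ga_n = \End_\A(\De_n)$ is a division ring) and hence projective, together with the outer terms $\Ker\e_X$ and $i_*i^*(X)$, both in $\bar\A$; adding up the finite projective dimensions yields one for $X$.

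With both $\E$ and $\A$ satisfying the hypotheses of Lemma~\ref{le:perf}, the inclusion of $\add P \cong \proj\La$ induces triangle equivalences $\bfD^b(\proj\La) \xto{\sim} \bfD^b(\E)$ and $\bfD^b(\proj\La) \xto{\sim} \bfD^b(\A)$, which compose to the desired equivalence $\bfD^b(\E) \xto{\sim} \bfD^b(\A)$ induced by $\E \hookrightarrow \A$. The main obstacle is the finite global dimension of $\A$: this is not visible from the definition of a highest weight category without invoking the recursive reduction of Lemma~\ref{le:reduction} and the projectivity of $j_!j^!(X)$ coming from $\Ga_n$ being a division ring.
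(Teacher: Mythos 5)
Your overall strategy coincides with the paper's: apply Lemma~\ref{le:perf} both to $\A$ and to the exact category $\Filt(\De_1,\ldots,\De_n)$ with the common projective generator $P=\bigoplus_i P_i$, and establish the finiteness of projective dimensions by induction on $n$ via $\bar\A=\{X\in\A\mid\Hom_\A(\De_n,X)=0\}$. Your verification that $P$ is a projective generator of $\Filt(\De_1,\ldots,\De_n)$ with resolutions staying inside that subcategory, and that every object there has finite projective dimension, is correct and matches what the paper takes for granted.

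There is, however, a gap in the inductive step for $\A$ itself. You claim that since every object of $\bar\A$ has finite projective dimension in $\bar\A$ (induction hypothesis) and Lemma~\ref{le:reduction} gives $\Ext^p_{\bar\A}(X,Y)\xto{\sim}\Ext^p_\A(X,Y)$, every object of $\bar\A$ has finite projective dimension \emph{in} $\A$. This inference is not valid: the isomorphism only controls $\Ext^p_\A(X,Y)$ for $Y\in\bar\A$, whereas $\pd_\A X<\infty$ requires vanishing of $\Ext^p_\A(X,-)$ on all of $\A$ for $p\gg0$. Equivalently, a finite projective resolution of $X$ in $\bar\A$ consists of objects that are projective only in $\bar\A$, and you have not bounded their projective dimension in $\A$. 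The missing ingredient, which is exactly the extra line in the paper's proof, is that every projective object of $\bar\A$ has projective dimension at most one in $\A$: it is of the form $\bar Q$ for a projective $Q\in\A$, and since $Q$ lies in $\Filt(\De_1,\ldots,\De_n)$ the counit $\e_Q$ is a monomorphism by Lemma~\ref{le:filtered}, yielding an exact sequence $0\to\De_n^r\to Q\to\bar Q\to 0$ with both outer-left terms projective in $\A$. Once this is inserted, your dimension count along the four-term sequence (whose middle term is a projective direct sum of copies of $\De_n$ and whose outer terms lie in $\bar\A$) goes through and completes the proof.
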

\begin{proof}
This follows from Lemma~\ref{le:perf} once we have shown that every
object in $\A$ has finite projective dimension, keeping in mind that
every object in $\Filt(\De_1,\ldots,\De_n)$ admits a
projective resolution in $\A$ that belongs to $\Filt(\De_1,\ldots,\De_n)$.

The fact that every object in $\A$ has finite projective dimension is
shown by induction on $n$. Consider
$\bar\A=\{X\in\A\mid\Hom_\A(\De_n,X)=0\}$ and for each $X$ in $\A$ the
exact sequence \eqref{eq:counit}. Then $\Ker\e_X$ and $\bar X$ have
finite projective dimension in $\bar\A$ since $\bar\A$ is a highest
weight category with $n-1$ standard objects, by
Lemma~\ref{le:reduction}. Every projective object from $\bar\A$ has
projective dimension at most one in $\A$ since it is of the form
$\bar P$ for some projective $P$ in $\A$ and $\e_P$ is a
monomorphism. Thus $\Ker\e_X$ and $\bar X$ have finite projective
dimension in $\A$. It follows that $X$ has finite projective
dimension.
\end{proof}

\begin{rem}
The proof of Lemma~\ref{le:filt} shows that $\Ext_\A^{2n-1}(-,-)=0$
for a highest weight category $\A$ with $n$ standard objects. This
bound is well known \cite{DR1989}.
\end{rem}

\begin{prop}\label{pr:hwt-except}
  The standard objects of a highest weight category form a strictly
  full exceptional sequence.
\end{prop}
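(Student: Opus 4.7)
The strictly full condition is immediate from Lemma~\ref{le:filt}, which already delivers a triangle equivalence $\bfD^b(\Filt(\De_1,\ldots,\De_n)) \xto{\sim} \bfD^b(\A)$. So the task reduces to checking the three exceptional-sequence conditions: that $\End_\A(\De_i)$ is a division ring, that $\Ext^p_\A(\De_i,\De_i) = 0$ for $p > 0$, and that $\Ext^p_\A(\De_i,\De_j) = 0$ for $i > j$ and all $p \ge 0$. The first is built into Definition~\ref{de:hwt}(1), so only the Ext-vanishings remain.

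The plan is to proceed by induction on the number $n$ of standard objects, using the reduction in Lemma~\ref{le:reduction}. Set $\bar\A = \{X \in \A \mid \Hom_\A(\De_n,X) = 0\}$; this is a highest weight category with standard objects $\De_1,\ldots,\De_{n-1}$, the inclusion $\bar\A \hookrightarrow \A$ is a homological embedding, and by Definition~\ref{de:hwt}(2) every $\De_j$ with $j < n$ lies in $\bar\A$. I would also record that $\De_n$ is a projective object of $\A$: the defining sequence $0 \to U_n \to P_n \to \De_n \to 0$ has $U_n \in \Filt(\De_{n+1},\ldots,\De_n) = 0$, so $\De_n \cong P_n$ is a direct summand of the projective generator.

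With these ingredients in hand the verification splits naturally. For the index $i = n$, projectivity gives $\Ext^p_\A(\De_n,-) = 0$ for all $p > 0$, and $\Hom_\A(\De_n,\De_j) = 0$ for $j < n$ is the defining property of $\bar\A$; this settles the exceptional condition at $\De_n$ and all pairs $(n,j)$ with $j < n$. For indices $i,j < n$, the induction hypothesis applied to $\bar\A$ delivers $\Ext^p_{\bar\A}(\De_i,\De_j) = 0$ whenever $i > j$ or $i = j$ with $p > 0$, and the homological embedding transports these vanishings to $\A$. The only non-routine point is the projectivity of $\De_n$, which is a small observation tucked inside Definition~\ref{de:hwt}(3); everything else is bookkeeping via Lemma~\ref{le:reduction}.
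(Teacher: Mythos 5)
Your proposal is correct and follows the same route as the paper: the exceptionality of $(\De_1,\ldots,\De_n)$ by induction on $n$ via Lemma~\ref{le:reduction} (splitting off the projective object $\De_n$ and transporting the remaining Ext-vanishings along the homological embedding $\bar\A\to\A$), and strict fullness directly from Lemma~\ref{le:filt}. The paper's proof is just a terser statement of exactly this argument, so your write-up supplies the details it leaves implicit.
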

\begin{proof}
  Let $\De_1,\ldots,\De_n$ be the exceptional objects.  It follows
  from Lemma~\ref{le:reduction} by induction on $n$ that the sequence
  $(\De_1,\ldots,\De_n)$ is exceptional. The sequence is strictly full
  by Lemma~\ref{le:filt}.
\end{proof}

The following lemma is the key for relating exceptional sequences and
highest weight categories; it is a variation of the `standardisation'
which Dlab and Ringel introduced in \cite{DR1992}.

\begin{lem}\label{le:standard}
  Let $\A$ be an abelian category and $(E_1,\ldots,E_n)$ a sequence
  of objects satisfying the following:
\begin{enumerate}
\item $\Ext^1_\A(E_i,E_j)=0$ for all $i\ge j$. 
\item $\Ext^1_\A(X,E_j)$ is finitely generated over
  $\End_\A(E_j)^\op$ for all $X\in\A$.
\end{enumerate}
Then there are exact sequences
\begin{equation}\label{eq:standard}
0\lto U_i\lto P_i\lto E_i \lto 0\qquad (1\le i\le n)
\end{equation}
in $\A$ such that $U_i$ belongs to $\Filt(E_{i+1},\ldots,E_n)$ for
all $i$ and $\bigoplus_{i=1}^n P_i$ is a projective generator of
$\Filt(E_1,\ldots,E_n)$.
\end{lem}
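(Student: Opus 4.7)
The plan is to construct each $P_i$ by an iterated universal extension of $E_i$ with successive copies of $E_{i+1}, E_{i+2}, \ldots, E_n$, ordered so that the $\Ext^1$ vanishings established at earlier steps are preserved. Set $Y^{(0)} := E_i$; for $t = 1, \ldots, n-i$, pick a finite set of generators of $\Ext^1_\A(Y^{(t-1)}, E_{i+t})$ as an $\End_\A(E_{i+t})^\op$-module, available by hypothesis~(2), and form the corresponding universal extension
\[
0 \lto E_{i+t}^{r_t} \lto Y^{(t)} \lto Y^{(t-1)} \lto 0.
\]
Because hypothesis~(1) yields $\Ext^1_\A(E_{i+t}, E_{i+t}) = 0$, applying $\Hom_\A(-, E_{i+t})$ to this extension and using the long exact sequence forces $\Ext^1_\A(Y^{(t)}, E_{i+t}) = 0$. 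I put $P_i := Y^{(n-i)}$; the composite $P_i \twoheadrightarrow E_i$ has kernel $U_i$ obtained as an iterated extension of copies of $E_{i+1}, \ldots, E_n$, so $U_i \in \Filt(E_{i+1}, \ldots, E_n)$ and $P_i \in \Filt(E_i, \ldots, E_n)$.

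The central check is that $\Ext^1_\A(P_i, E_j) = 0$ for every $j$. For $j \leq i$, induct on $t$: hypothesis~(1) gives $\Ext^1_\A(E_i, E_j) = 0$ and $\Ext^1_\A(E_{i+t}, E_j) = 0$ for all $t \geq 1$, so each step's long exact sequence preserves the vanishing. For $j = i+t$ with $1 \leq t \leq n-i$ the vanishing is arranged at step~$t$ and is propagated through the later steps by the same long-exact-sequence argument, using $\Ext^1_\A(E_{i+t'}, E_{i+t}) = 0$ for $t' > t$ from~(1). An induction on filtration length then extends this to $\Ext^1_\A(P_i, X) = 0$ for every $X \in \Filt(E_1, \ldots, E_n)$, and since $\Filt(E_1, \ldots, E_n)$ is extension-closed in $\A$, this makes $P_i$ projective in the exact subcategory $\Filt(E_1, \ldots, E_n) \subseteq \A$.

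To see that $P := \bigoplus_{i=1}^n P_i$ is a generator, I would argue by induction on filtration length. Given $X \in \Filt(E_1, \ldots, E_n)$ with a short exact sequence $0 \to X' \to X \to E_j \to 0$ where $X'$ has shorter filtration length, lift the surjection $P_j \twoheadrightarrow E_j$ along $X \twoheadrightarrow E_j$ using projectivity of $P_j$ in $\Filt$, and combine with the inductive epimorphism $P^r \twoheadrightarrow X'$ to assemble an epimorphism $P^{r+1} \twoheadrightarrow X$.

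The main obstacle is arranging the order of the universal extensions so that each step preserves all previously established vanishings. Hypothesis~(1) enters in two complementary ways: it guarantees that each universal extension genuinely annihilates its targeted $\Ext$ group (via $\Ext^1(E_{i+t}, E_{i+t}) = 0$), and it supplies the vanishings $\Ext^1(E_{i+t'}, E_{i+s}) = 0$ for $t' > s$ that let earlier vanishings survive under subsequent universal extensions. Hypothesis~(2) is what lets the universal extensions have finite rank at every step.
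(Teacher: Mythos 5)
Your proof is correct and follows essentially the same route as the paper: building each $P_i$ from $E_i$ by universal extensions with copies of $E_{i+1},\ldots,E_n$ (made possible by hypothesis~(2)) and using hypothesis~(1) both to kill the targeted $\Ext^1$ group and to preserve the earlier vanishings. The paper merely packages the same construction as an induction on $n$ (one universal extension with $E_n$ on top of the generators for $\Filt(E_1,\ldots,E_{n-1})$), whereas you unroll the iteration explicitly and verify $\Ext^1_\A(P_i,E_j)=0$ for each $j$ directly.
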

\begin{proof}
  We use induction on $n$.  The induction hypothesis yields a collection
  of exact sequences
  \[0\lto \bar U_i\lto \bar P_i\lto E_i \lto 0\qquad (1\le i < n)\]
  in $\Filt(E_{i+1},\ldots,E_{n-1})$. We modify them as
  follows. Using Ext-finiteness we can form the universal extension
  \[0\lto E_n^r\lto P_i\lto\bar P_i\lto 0\]
  in $\A$, that is, the induced map
  $\Hom_\A(E_n^r,E_n)\to\Ext_\A^1(\bar P_i,E_n)$ is
  surjective. This implies $\Ext^1_\A(P_i,E_n)=0$. Also,
  $\Ext^1_\A(P_i,-)$ vanishes on $\Filt(E_1,\ldots,E_{n-1})$. 

  We claim that $P_i$ is a projective object in
  $\Filt(E_1,\ldots,E_n)$.  First observe that each object $X$ in
  $\Filt(E_1,\ldots,E_n)$ fits into an exact sequence
  $0\to E_n^s\to X\to \bar X\to 0$ for some $s\ge 0$ with $\bar X$ in
  $\Filt(E_1,\ldots,E_{n-1})$, since $E_n$ is projective.  Now apply
  $\Ext^1_\A(P_i,-)$ to this sequence. 

  We obtain the following commutative diagram with exact rows and
  columns
\begin{equation*}
\begin{tikzcd}
{}&0\arrow{d}&0\arrow{d}\\
{}&E_n^r\arrow{d}\arrow[equal]{r}&E_n^r\arrow{d}\\
0\arrow{r}&U_i\arrow{d}\arrow{r}&P_i\arrow{d}\arrow{r}&E_i\arrow[equal]{d}\arrow{r}&0\\
0\arrow{r}&\bar U_i\arrow{d}\arrow{r}&\bar P_i\arrow{d}\arrow{r}&E_i\arrow{r}&0\\
{}&0&0
\end{tikzcd}
\end{equation*}
and get exact sequences \eqref{eq:standard}
with $U_i$ in $\Filt(E_{i+1},\ldots,E_n)$, where $P_n:=E_n$ and
$U_n:=0$. It remains to observe that $\bigoplus_i P_i$ is a
projective generator of $\Filt(E_1,\ldots,E_n)$.
\end{proof}

\begin{proof}[Proof of Theorem~\ref{th:exceptional}]
  (1) $\Rightarrow$ (2): Let $(E_1,\ldots, E_n)$ be an exceptional
  sequence in $\A$. Then it follows from Lemma~\ref{le:standard} that
  $\Filt(E_1,\ldots, E_n)$ admits a projective generator, say $P$. Set
  $\La=\End_\A(P)$ and $\De_i=\Hom_\A(P,E_i)$ for $1\le i\le n$. Then
  $\Hom_\A(P,-)$ induces a fully faithful and exact functor
  $\Filt(E_1,\ldots, E_n)\to\mod\La$, and $\mod\La$ is a highest
  weight category with standard objects $\De_1,\ldots,\De_n$ because
  of the sequences \eqref{eq:standard}. If $(E_1,\ldots, E_n)$ is
  strictly full, then $\Hom_\A(P,-)$ extends to a triangle equivalence
  $\bfD^b(\A)\xto{\sim} \bfD^b(\mod \La)$ by Lemma~\ref{le:filt}.

  (2) $\Rightarrow$ (1): Let $F\colon\bfD^b(\A)\xto{\sim} \bfD^b(\A')$
  be a triangle equivalence that identifies $(E_1,\ldots,E_n)$ with
  the sequence of standard objects $(\De_1,\ldots,\De_n)$ in
  $\A'$. Then the sequence $(E_1,\ldots,E_n)$ is exceptional, because
  $(\De_1,\ldots,\De_n)$ is exceptional by
  Proposition~\ref{pr:hwt-except}. An induction on $n$ shows that $F$
  induces an equivalence
  \[\Filt(E_1,\ldots, E_n)\xto{\ \sim\ }\Filt(\De_1,\ldots, \De_n).\]
  Here we use the fact that for each object $X$ in
  $\Filt(E_{1},\ldots,E_n)$ there is some $r\ge 0$ and an exact
  sequence $0\to E_n^r\to X\to X'\to 0$ with $X'$ in
  $\Filt(E_{1},\ldots,E_{n-1})$.  This equivalence extends to a
  triangle equivalence
  \[\bfD^b(\Filt(E_1,\ldots, E_n))\xto{\ \sim\
  }\bfD^b(\Filt(\De_1,\ldots, \De_n))\]
  making the following square of exact functors commutative
\[
\begin{tikzcd}
\bfD^b(\Filt(E_1,\ldots,E_n))\arrow{r}{\sim}\arrow{d}&\bfD^b(\Filt(\De_1,\ldots,\De_n)) \arrow{d}\\
\bfD^b(\A) \arrow{r}[below]{F}&\bfD^b(\A')
\end{tikzcd}
\]
where the vertical functors are induced by the inclusions
$\Filt(E_1,\ldots, E_n)\to\A$ and $\Filt(\De_{1},\ldots,\De_n)\to\A'$
respectively.  The vertical functor on the right is an equivalence by
Lemma~\ref{le:filt}, and it follows that the vertical functor on the
left is an equivalence. Thus the sequence $(E_1,\ldots,E_n)$ is
strictly full.
\end{proof}

I am grateful to Martin Kalck for providing the following
example of a full exceptional sequence that is not strictly full.

\begin{exm}[Kalck]\label{ex:kalck}
Fix a field $k$ and consider the finite dimensional $k$-algebra $\La$
given by the following quiver with relations.
\[
\begin{tikzcd}[column sep=normal, row sep=scriptsize]
&1 \arrow{ld}[swap]{\alpha}\\
2  \arrow{rr}[swap]{\beta}
  &&3 \arrow{lu}[swap]{\gamma}
\end{tikzcd}\qquad\qquad
\begin{aligned}
\gamma  \beta&= 0\\ 
\alpha \gamma&=0
\end{aligned}
\]
For each vertex $i$ let $S_i$ denote the  corresponding simple
$\La$-module and $P_i$ its projective cover. Then $(S_1,P_2,P_3)$ is
an exceptional sequence in $\A=\mod\La$ which generates $\bfD^b(\A)$
as a triangulated category. Set $\B=\Filt(S_1,P_2,P_3)$. Then we have
$\B=\add (S_1\oplus P_2\oplus P_3)$ but $\Ext^2_\La(S_1,P_2)\neq
0$. Thus the canonical functor $\bfD^b(\B)\to\bfD^b(\A)$ is not full.
\end{exm}

The following geometric example is more involved, and I am grateful to
Nathan Broomhead for allowing me to include this.

\begin{exm}[Broomhead]\label{ex:broomhead}
Let $\bbX$ be the blow up of $\mathbb P^3$ at a torus invariant
point. We consider this as a toric variety given by a fan with rays:
\[\{(1,0,0),(0,1,0),(0,0,1),(-1,-1,-1), (1,1,1)\}.\]
Label the corresponding divisors $D_1,D_2,D_3,D,E$. Note that $D$ and
$E$ form a basis of $\Pic \bbX$, where $D_i\sim D-E$ for $i=1,2,3$. An
explicit calculation shows that
\[X=(\O(-3D+2E),\O(-2D+E),\O(-D),\O(-2D+2E),\O(-D+E),\O)\]
is a full strong exceptional sequence in $\A=\coh\bbX$. Mutating this
sequence, we obtain a new full exceptional sequence
\[X'=(\O(-2D+E),\O(-D),\O(-E),\O(-2D+2E),\O(-D+E),\O)\]
which is not strictly full. Set $\B=\Filt(X')$. Then
$\B=\add(\O(-2D+E)\oplus\O(-D)\oplus\O(-E)\oplus\O(-2D+2E)\oplus\O(-D+E)\oplus\O)$
but $\Ext^2_\bbX(\O(-E),\O(-2D+2E))\neq 0$. Thus the canonical functor
$\bfD^b(\B)\to\bfD^b(\A)$ is not full.
\end{exm}

We end this note by giving the proof of Theorem~\ref{th:standard-defn}
from the introduction.

\begin{proof}[Proof of Theorem~\ref{th:standard-defn}]
  Let $\A=\mod\La$ for some artin algebra. Suppose first that $\A$ is
  a highest weight category with standard objects
  $\De_1,\ldots,\De_n$. Then all but one of the conditions (1)--(4)
  hold by definition, while (3) follows by induction on $n$ from
  Lemma~\ref{le:reduction}.  The converse is an immediate consequence
  of Lemma~\ref{le:standard}.
\end{proof}

\begin{appendix}

\section{Homological recollements}

There are well known criteria for an inclusion of abelian categories
$\A'\to\A$ to extend to a fully faithful functor between their derived
categories \cite{Ha1966,Il1971, Ke1996}, and closely related is the
question when the inclusion induces isomorphisms
\begin{equation*}\label{eq:ext-iso}
\Ext^p_{\A'}(X,Y)\xto{\sim}\Ext^p_\A(X,Y)
\end{equation*}
for all $X,Y\in\A'$ and $p\ge 0$.

This appendix provides a necessary and sufficient criterion for a
colocalisation sequence of abelian categories
\begin{equation}\label{eq:coloc-app}
\begin{tikzcd}
\A' \arrow[tail,yshift=-0.75ex]{rr}[swap]{i_*} &&\A  \arrow[twoheadrightarrow,yshift=0.75ex]{ll}[swap]{i^*}
\arrow[twoheadrightarrow,yshift=-0.75ex]{rr}[swap]{j^!} &&\A''\, .
\arrow[tail,yshift=0.75ex]{ll}[swap]{j_!}
\end{tikzcd}
\end{equation}

\begin{prop}\label{pr:derived}
  Suppose that $\A$ has enough projective objects and that $j^!$
  preserves projectivity.  Then the following are equivalent:
\begin{enumerate}
\item The counit $j_!j^!(X)\to X$ is a
  monomorphism for every projective  $X\in\A$.
\item There is an induced colocalisation sequence of triangulated
  categories
\begin{equation}\label{eq:coloc-app-der}
\begin{tikzcd}
  \bfD^{-}(\A') \arrow[tail,yshift=-0.75ex]{rr}[swap]{i_*}
  &&\bfD^{-}(\A)
  \arrow[twoheadrightarrow,yshift=0.75ex]{ll}[swap]{i^*}
  \arrow[twoheadrightarrow,yshift=-0.75ex]{rr}[swap]{j^!}
  &&\bfD^{-}(\A'')\, .  \arrow[tail,yshift=0.75ex]{ll}[swap]{j_!}
\end{tikzcd}
\end{equation}
\end{enumerate}
\end{prop}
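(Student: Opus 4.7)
The plan is to handle the two implications separately. The direction (2) $\Rightarrow$ (1) reduces to a bookkeeping exercise in degree zero: I would take $X$ projective in $\A$ and examine the counit triangle $\bfL j_! j^!(X) \to X \to i_* \bfL i^*(X) \to$ in $\bfD^{-}(\A)$. Since $X$ is its own projective resolution, $\bfL i^*(X) = i^*(X)$ is concentrated in degree zero; and since $j^!(X)$ is projective in $\A''$ by hypothesis, $\bfL j_! j^!(X) = j_! j^!(X)$ is also concentrated in degree zero. The triangle thus collapses to a short exact sequence $0 \to j_! j^!(X) \to X \to i_* i^*(X) \to 0$ in $\A$, giving (1).

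For (1) $\Rightarrow$ (2) I would first observe that $\A''$ has enough projectives, all of the form $j^!(Q)$ with $Q$ projective in $\A$: for $Y\in\A''$, a projective cover $Q \twoheadrightarrow j_!(Y)$ in $\A$ yields, upon applying the exact functor $j^!$, an epimorphism $j^!(Q) \twoheadrightarrow Y$ (using that $j_!$ is fully faithful), and $j^!(Q)$ is projective by assumption. I can then define $\bfL j_! \colon \bfD^{-}(\A'') \to \bfD^{-}(\A)$ by means of such resolutions, and $\bfL i^*$ via projective resolutions in $\A$, noting that $i^*$ preserves projectives since $i_*$ is exact. The structural input is the following: for a projective resolution $Q_\bullet \to X$ in $\A$, condition (1) applied degree-wise yields a short exact sequence of complexes
\[0 \lto j_! j^!(Q_\bullet) \lto Q_\bullet \lto i_* i^*(Q_\bullet) \lto 0,\]
which represents the required distinguished triangle $\bfL j_! j^!(X) \to X \to i_* \bfL i^*(X) \to$ in $\bfD^{-}(\A)$; the case of arbitrary $X \in \bfD^{-}(\A)$ follows by the usual truncation arguments. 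From this triangle the remaining axioms fall into place: applied to $X = i_*(Y)$ and combined with $j^! i_* = 0$ it gives $\bfL i^* i_* \cong \id$, whence $i_*$ is fully faithful; and the unit isomorphism $\id \cong j^! j_!$ propagates term-wise along a resolution $P_\bullet = j^!(Q_\bullet)$ to yield $j^! \bfL j_! \cong \id$, whence $\bfL j_!$ is fully faithful. The kernel-image characterisation $j^!(X)=0 \Leftrightarrow X \in \Im(i_*)$ in $\bfD^{-}(\A)$ likewise reads off from the same triangle.

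The main obstacle, as anticipated, is the verification that $(\bfL j_!, j^!)$ is an adjoint pair on derived categories. The plan is to establish that $j_!(P_\bullet)$ is $K$-projective in $\bfK^{-}(\A)$ whenever $P_\bullet$ is a bounded-above complex of projectives in $\A''$ of the form $j^!(Q_\bullet)$. This should follow from the chain-level adjunction $\Hom^\bullet_\A(j_! P_\bullet, Z) \cong \Hom^\bullet_{\A''}(P_\bullet, j^! Z)$: a quasi-isomorphism $Z \to Z'$ in $\bfK^{-}(\A)$ remains one after $j^!$ by exactness, and the $K$-projectivity of $P_\bullet$ in $\A''$ then makes the right-hand complex homotopy-invariant under such qis's, whence so is the left-hand one. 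Once $K$-projectivity of $j_!(P_\bullet)$ is in hand, the derived adjunction reduces to the chain-level adjunction, and the colocalisation sequence on $\bfD^{-}$ is complete.
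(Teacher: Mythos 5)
Your proposal is correct, but it takes a genuinely different route to (1) $\Rightarrow$ (2) than the paper. The paper never invokes $K$-projectivity or derived adjunctions explicitly: it restricts the given colocalisation to the extension closure $\Filt(\P',\P'')$ inside $\A$ of the (images of the) projectives of $\A'$ and $\A''$, shows that all four functors become exact functors of exact categories there (condition (1) enters twice: to place $\P$ inside $\Filt(\P',\P'')$ via $0\to j_!j^!(X)\to X\to i_*i^*(X)\to 0$, and to get exactness of $i^*$ by the snake lemma), and then identifies $\bfD^-(\P')$, $\bfD^-(\Filt(\P',\P''))$, $\bfD^-(\P'')$ with $\bfD^-(\A')$, $\bfD^-(\A)$, $\bfD^-(\A'')$, so the derived colocalisation is simply induced from a colocalisation of exact categories. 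You instead build $\bfL j_!$ and $\bfL i^*$ directly from projective resolutions, prove the derived adjunction $(\bfL j_!,j^!)$ via $K$-projectivity of $j_!(P_\bullet)$ (your argument is fine, and in fact works for any bounded-above complex of projectives in $\A''$ — it is exactly the statement that a left adjoint of an exact functor preserves $K$-projectives), and use condition (1) only where it is really needed, namely to get the degreewise short exact sequence $0\to j_!j^!(Q_\bullet)\to Q_\bullet\to i_*i^*(Q_\bullet)\to 0$ representing the counit--unit triangle, from which the remaining axioms are read off. What the paper's route buys is economy and conceptual parallelism with the $\Filt(\De_1,\ldots,\De_n)$ constructions in the main text; what yours buys is a self-contained verification by standard derived-functor technology. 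Two routine points you gloss over and should spell out: the adjunction $(\bfL i^*,i_*)$ at the derived level needs the same (easier) argument, using that $i^*(Q_\bullet)$ is a bounded-above complex of projectives in $\A'$ and hence $\Hom_{\bfK(\A')}(i^*Q_\bullet,Z)\cong\Hom_{\bfK(\A)}(Q_\bullet,i_*Z)$ computes derived Homs on both sides; and to pass from the isomorphism $i_*(Y)\xto{\sim}i_*\bfL i^*i_*(Y)$ given by your triangle to the counit isomorphism $\bfL i^*i_*\cong\id$ (hence full faithfulness of $i_*$) you need to cancel $i_*$, i.e.\ observe that $i_*\colon\bfD^-(\A')\to\bfD^-(\A)$ is conservative because $i_*$ is exact and fully faithful on abelian categories, together with a triangle identity. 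Also, no truncation is needed for the triangle on a general $X\in\bfD^-(\A)$: a bounded-above complex admits a termwise projective resolution outright. Your (2) $\Rightarrow$ (1) is the same argument as the paper's.
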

\begin{proof}
  (1) $\Rightarrow$ (2): Let $\P$ denote the full subcategory of
  projective objects in $\A$; the categories $\P'$ and $\P''$ are
  defined analogously. We view $\A'$ and $\A''$ as full subcategories
  of $\A$ via $i_*$ and $j_!$, respectively, and write
  $\Filt(\P',\P'')$ for the smallest extension closed subcategory of
  $\A$ containing $\P'$ and $\P''$. This contains $\P$ since each
  projective object $X$ fits into an exact sequence
  \[0\lto j_!j^!(X)\lto X\lto i_*i^*(X)\lto 0.\]
  Note that the diagram \eqref{eq:coloc-app} restricts to
\begin{equation}\label{eq:coloc-app-proj}
\begin{tikzcd}
\P' \arrow[tail,yshift=-0.75ex]{rr}[swap]{i_*} &&\Filt(\P',\P'')  \arrow[twoheadrightarrow,yshift=0.75ex]{ll}[swap]{i^*}
\arrow[twoheadrightarrow,yshift=-0.75ex]{rr}[swap]{j^!} &&\P''  \arrow[tail,yshift=0.75ex]{ll}[swap]{j_!}
\end{tikzcd}
\end{equation}
and all functors in this diagram are exact. The only functor for which
this is not obvious is $i^*$. In that case exactness follows from the
snake lemma because the counit $j_!j^!(X)\to X$ is a monomorphism for
every $X$ in $\Filt(\P',\P'')$. Thus the diagram
\eqref{eq:coloc-app-proj} induces a colocalisation sequence
\begin{equation}\label{eq:coloc-app-proj-der}
\begin{tikzcd}
  \bfD^-(\P') \arrow[tail,yshift=-0.75ex]{rr}[swap]{i_*}
  &&\bfD^-(\Filt(\P',\P''))
  \arrow[twoheadrightarrow,yshift=0.75ex]{ll}[swap]{i^*}
  \arrow[twoheadrightarrow,yshift=-0.75ex]{rr}[swap]{j^!}
  &&\bfD^-(\P'')\, .
  \arrow[tail,yshift=0.75ex]{ll}[swap]{j_!}
\end{tikzcd}
\end{equation}
We claim that the diagrams \eqref{eq:coloc-app-der} and
\eqref{eq:coloc-app-proj-der} are equivalent via triangle equivalences
induced by the inclusions 
\[f'\colon\P'\to\A'\qquad f''\colon\P''\to\A''\qquad
f\colon\Filt(\P',\P'')\to\A.\]
This is clear for $f'$ and $f''$, since $\A'$ and $\A''$ have enough
projective objects. For $f$ it suffices to note that the
inclusion $\P\to\Filt(\P',\P'')$ yields a triangle equivalence
$\bfD^-(\P)\xto{\sim}\bfD^-(\Filt(\P',\P''))$, since $\P$ equals the
full subcategory of projective objects of the exact category
$\Filt(\P',\P'')$.

(2) $\Rightarrow$ (1): Suppose there is a colocalisation sequence
\eqref{eq:coloc-app-der}. Given a projective object $X$ in $\A$, we
have an exact triangle \[ j_!j^!(X)\lto X\lto i_*i^*(X)\lto \]
in $\bfD^-(\A)$. This uses the fact that for complexes of projectives
the derived functors of $i^*$ and $j_!$ are defined degreewise via
$i^*$ and $j_!$, respectively. Taking cohomology, we obtain an exact
sequence
\[\cdots\lto 0\lto j_!j^!(X)\lto X\lto i_*i^*(X)\lto 0 \lto \cdots\]
in $\A$. It follows that the counit $j_!j^!(X)\to X$ is a
monomorphism.
\end{proof}

\begin{rem}
  There is a dual version of Proposition~\ref{pr:derived} for
  localisation sequences of abelian categories with enough injective
  objects. This situation arises frequently, a typical example being a
  Grothendieck abelian category $\A$ with localising subcategory
  $\A'$.
\end{rem}

\begin{rem} 
  Proposition~\ref{pr:derived} covers a couple of known criteria for
  an inclusion of abelian categories $\A'\to\A$ to extend to a fully
  faithful functor between their derived categories. Consider a
  colocalisation sequence \eqref{eq:coloc-app} and suppose that $\A$
  has enough projective objects.

  (1) The criterion in \cite[Proposition~4.8]{Ha1966} requires that
  every object $Y$ in $\A'$ admits an epimorphism $X\to Y$ in $\A'$
  with $X$ projective in $\A$. This implies easily that $j^!$
  preserves projectivity and that for each projective $X$ in $\A$ the
  counit $j_!j^!(X)\to X$ is a \emph{split} monomorphism.

  (2) The criterion in \cite[\S 12]{Ke1996} requires for every
  epimorphism $X\to Y$ in $\A$ with $Y$ in $\A'$ the existence of an
  epimorphism $X'\to Y$ in $\A'$ that factors through $X\to Y$.  Given
  our assumptions, this condition is equivalent to the one in
  \cite[Proposition~4.8]{Ha1966}.
\end{rem}

\end{appendix}

\end{document}